\newtheorem{thm}{Th\'eor\`eme}
\newtheorem{defi}{D\'efinition}
\newtheorem{cor}{Corollaire}
\newtheorem{lem}{Lemme}
\newtheorem{prop}{Proposition}
\newtheorem{rem}{Remarque}
\DeclareMathOperator{\Hom}{Hom}
\author{Niels Borne \and Michel Emsalem}
\title{Note sur la d\'etermination alg\'ebrique du groupe fondamental pro-r\'esoluble
d'une courbe affine}
\begin{document}
\maketitle
\section{Introduction}

On consid\`ere une courbe projective connexe et lisse $\bar X$ de genre $g$ sur un corps alg\'ebriquement clos $k$ de caract\'eristique 
$p$ (\'eventuellement nulle) et $X= \bar X \setminus \{ a_1, \dots , a_r\} $ la courbe affine d\'efinie sur $k$  obtenue en \^otant 
$r$ points \`a $\bar X$ (avec $r\geq 1$). La connaissance du groupe fondamental alg\'ebrique $\pi _1 ( X, \bar x)$ 
repose sur des arguments transcendants, en particulier sur le ``Th\'eor\`eme d'existence de Riemann`` (voir \cite{SGA1} XII Th\'eor\`eme 5.1). Le but de cette note est de prouver le th\'eor\`eme de structure pour le plus grand quotient pro-r\'esoluble et premier \`a $p$ du groupe fondamental, en n'utilisant que des outils alg\'ebriques. On reprend \`a cette fin les id\'ees essentielles et les arguments de \cite{JPS}.

Pour un groupe fini $G$, on note $n_G$ le nombre minimal de
g\'en\'erateurs de $G$, et ${\cal P}_G$ la propri\'et\'e suivante :

$$n_G\leq 2g+r-1 \iff \exists \pi_1(X,\overline{x})\twoheadrightarrow G$$

On sait que ${\cal P}_G$ est vraie pour tout groupe $G$ d'ordre premier \`a la caract\'eristique $p$, mais la
preuve est ``transcendante''.

On prouvera avec des outils alg\'ebriques l'\'enonc\'e suivant.

\begin{prop}
\label{state}  
Soit la suite exacte de groupes :
$$1 \rightarrow A \rightarrow G \rightarrow H\rightarrow 1 \; .$$
Si $A$ est \emph{r\'esoluble}, si $G$ est d'ordre premier \`a $p$, et si ${\cal P}_H$ est vraie, alors ${\cal P}_G$ est vraie. 
\end{prop}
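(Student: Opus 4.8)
\textit{Esquisse de preuve.} Le plan est de se ramener par dévissage au cas où $A$ est un $\mathbb{F}_\ell[H]$-module simple avec $\ell\neq p$, puis de conclure en confrontant deux dénombrements : l'un de théorie des groupes (du type Gaschütz), l'autre de cohomologie étale de $X$. Pour le dévissage on raisonne par récurrence sur $|A|$ ; tout sous-quotient de $G$ reste d'ordre premier à $p$, et $\mathcal{P}$ des quotients intermédiaires est fournie soit par la récurrence, soit (en bout de chaîne) par l'hypothèse $\mathcal{P}_H$. Si $A$ n'est pas abélien on coupe la suite par $[A,A]$ (caractéristique dans $A$, donc distingué dans $G$), ce qui ramène à un noyau abélien ; si $A$ est abélien on se ramène successivement, via ses composantes $\ell$-primaires (caractéristiques), puis la filtration $A\supseteq\ell A\supseteq\cdots$, puis une suite de composition, au cas où $A$ est un $\mathbb{F}_\ell[H]$-module simple. À chaque étape le noyau décroît strictement ; comme $A$ est abélien, l'action sur chacun de ses sous-quotients se factorise encore par $H$, de sorte que la simplicité ne dépend pas du quotient intermédiaire choisi.

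\textit{Le cas $A=M$ simple, avec la partie de théorie des groupes.} Posons $\delta=1$ si la suite $1\to M\to G\to H\to 1$ est scindée, $\delta=0$ sinon. La simplicité de $M$ fournit le fait-clé : tout sous-groupe propre de $G$ se surjectant sur $H$ coupe $M$ trivialement (son intersection avec $M$ est un sous-$H$-module propre de $M$), donc est un supplémentaire de $M$. \textbf{(a)} On en déduit que tout $d$-uplet de générateurs de $H$ admet exactement $|M|^d-\delta\,|Z^1(H,M)|$ relèvements engendrant $G$ : les relèvements non générateurs sont exactement ceux contenus dans un supplémentaire, et ceux-ci sont paramétrés par $Z^1(H,M)$ une fois fixé un scindage (un cocycle est déterminé par ses valeurs sur une partie génératrice). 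Ce nombre étant positif ou nul et indépendant du $d$-uplet, on obtient l'équivalence : $n_G\leq d$ si et seulement si $n_H\leq d$ et $|M|^d>\delta\,|Z^1(H,M)|$.

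\textit{La partie géométrique et la conclusion.} \textbf{(b)} Soit $\phi:\pi_1(X,\bar x)\twoheadrightarrow H$ une surjection. Comme $\cd_\ell\pi_1(X)\leq 1$ (annulation d'Artin sur une courbe affine), l'obstruction dans $H^2(\pi_1(X),M_\phi)$ est nulle, $\phi$ se relève, et l'ensemble de ses relèvements est un torseur sous $Z^1(\pi_1(X),M_\phi)$. La formule d'Euler--Poincaré pour les faisceaux modérés (Grothendieck--Ogg--Shafarevich), jointe à l'égalité $\dim_{\mathbb{F}_\ell}H^1(\pi_1(X),\mathbb{F}_\ell)=2g+r-1$ (le cas abélien, établi algébriquement), donne $|Z^1(\pi_1(X),M_\phi)|=|M|^{2g+r-1}$ ; et le même argument ``sous-groupe propre $=$ supplémentaire'' montre que le nombre de relèvements \emph{surjectifs} de $\phi$ vaut $|M|^{2g+r-1}-\delta\,|Z^1(H,M)|$. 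On prend alors $d=2g+r-1$. Si $\pi_1(X,\bar x)\twoheadrightarrow G$ existe, elle induit $\pi_1(X,\bar x)\twoheadrightarrow H$, d'où $n_H\leq 2g+r-1$ par $\mathcal{P}_H$ ; cette surjection étant un relèvement surjectif du morphisme qu'elle induit sur $H$, on a $|M|^{2g+r-1}>\delta\,|Z^1(H,M)|$, donc $n_G\leq 2g+r-1$ par (a). Réciproquement, si $n_G\leq 2g+r-1$ alors $n_H\leq n_G\leq 2g+r-1$, $\mathcal{P}_H$ fournit un $\phi$, (a) donne $|M|^{2g+r-1}>\delta\,|Z^1(H,M)|$, et (b) en tire un relèvement surjectif de $\phi$, c'est-à-dire une surjection $\pi_1(X,\bar x)\twoheadrightarrow G$. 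Ceci établit $\mathcal{P}_G$ dans le cas de base et clôt la récurrence.

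\textit{Point délicat.} La vraie difficulté n'est pas l'existence d'un relèvement mais sa \emph{surjectivité} : c'est là qu'intervient le caractère scindé ou non de la suite, et c'est là que les hypothèses (la borne sur $n_G$, resp.\ l'existence d'un revêtement de groupe $G$) sont utilisées \emph{quantitativement}, par confrontation des deux dénombrements (a) et (b). Il faudra par ailleurs vérifier soigneusement que tous les ingrédients invoqués --- $\cd_\ell\pi_1(X)\leq 1$, la valeur de $H^1(\pi_1(X),\mathbb{F}_\ell)$, la formule d'Euler--Poincaré en ramification modérée --- sont bien de nature algébrique, et que le dévissage ne fait jamais sortir du domaine des groupes finis d'ordre premier à $p$.
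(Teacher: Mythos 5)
Your proof is correct, and most of its ingredients coincide with the paper's: the d\'evissage is that of Lemmas \ref{devissageab}, \ref{devissageres} and \ref{outil}; your count (a) is Lemma \ref{lem5} (your $|Z^1(H,M)|$ is the number of sections, equal to $|H^1(H,A)|\cdot|A|/|A^H|$, so the inequality $|M|^{2g+r-1}>\delta\,|Z^1(H,M)|$ is exactly the condition $\dim H^1(X,\underline{A})>\dim H^1(H,A)$ of Lemma \ref{lem3} once Lemma \ref{lem4} is substituted in); and your evaluation $|Z^1(\pi_1(X,\overline{x}),M_\phi)|=|M|^{2g+r-1}$ is Lemma \ref{lem4} combined with $|Z^1|=|H^1|\cdot|M|/|M^H|$ --- when writing this up, derive it from $\dim H^1(X,\underline{M})=(2g+r-2)\dim M+\dim M^H$, i.e.\ Grothendieck--Ogg--Shafarevich applied to the nonconstant tame local system $\underline{M}$ (tameness coming from $G$, hence every intermediate quotient of the d\'evissage, being of order prime to $p$), rather than from $H^1(X,{\mathbb F}_l)$ alone. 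The genuine difference lies in the organisation of the irreducible case. The paper separates the non-split case, handled via the five-term Hochschild--Serre sequence and the explicit formula ${\rm trans}(u)=-\gamma\cup u$ for the transgression (Lemmas \ref{lem1} and \ref{lem2} --- the most delicate point, since the cup-product formula must be carried over to profinite groups), from the split case, handled by the dimension count of Lemma \ref{lem3}. You treat both at once: the lifts of $\phi$ form a nonempty torsor under $Z^1(\pi_1(X,\overline{x}),M_\phi)$ (nonempty by the same Artin-vanishing input $H^2=0$ that the paper uses), the non-surjective lifts are exactly the $s\circ\phi$ for $s$ a section of $q$ (your complement dichotomy, which is also the engine of the paper's Lemma \ref{lem5}), and the resulting number of surjective lifts equals the number of generating $(2g+r-1)$-tuples computed in (a). This buys a uniform cocycle-counting argument that dispenses with the transgression lemma entirely; what it gives up is the explicit intermediate cover $Z\to Y\to X$ that the paper constructs in the non-split case. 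The remaining loose ends (the trivial case where both assertions of ${\cal P}_H$ fail; the fact that irreducibility of the successive kernels of the d\'evissage may be taken over $H$ because $A$ is abelian) are handled correctly in your sketch.
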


En particulier, on a donc une preuve alg\'ebrique du fait que ${\cal P}_G$
est vraie pour $G$ r\'esoluble d'ordre premier \`a la caract\'eristique $p$.

On en d\'eduit une preuve alg\'ebrique du th\'eor\`eme de structure du quotient $p'$-r\'esoluble du groupe fondamental alg\'ebrique d'une courbe alg\'ebrique affine. Pour un groupe (profini) $G$, notons  $G^{res}$ la limite projective de ses
quotients r\'esolubles finis d'ordre premier \`a $p$. Pour tout entier $N \geq 1$, $F_N$ d\'esignera 
un groupe profini libre \`a $N$ g\'en\'erateurs.

\begin{thm} \label{structure}

Soit $X= \bar X \setminus \{ a_1, \dots , a_r\} $ une courbe affine d\'efinie sur un corps alg\'ebriquement clos $k$ de caract\'eristique 
$p$ (\'eventuellement nulle), 
o\`u $\bar X$ d\'esigne une courbe projective lisse et connexe de genre $g$ et $r $ un entier sup\'erieur ou \'egal \`a $1$.  Alors :

$$ \pi_1^{res}(X,\overline{x})\simeq F_{2g+r-1} ^{res} \; .$$

\end{thm}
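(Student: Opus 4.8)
\medskip
\noindent\emph{Esquisse de d\'emonstration de \ref{structure}.} Le plan est d'\'etendre d'abord la propri\'et\'e ${\cal P}_G$ \`a tous les groupes finis r\'esolubles d'ordre premier \`a $p$, puis d'en d\'eduire l'isomorphisme voulu par la caract\'erisation usuelle des pro-groupes libres de rang fini. Pour le premier point on proc\`ede par r\'ecurrence sur $|G|$~: le cas $G=1$ est trivial, et si $|G|>1$ on choisit un sous-groupe normal minimal $A$ de $G$~; $G$ \'etant r\'esoluble, $A$ est ab\'elien \'el\'ementaire, donc r\'esoluble et d'ordre premier \`a $p$, et $H=G/A$ est r\'esoluble, d'ordre premier \`a $p$, avec $|H|<|G|$, de sorte que ${\cal P}_H$ est vraie par hypoth\`ese de r\'ecurrence et que la Proposition~\ref{state} donne ${\cal P}_G$.

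Posons $N=2g+r-1$ et notons ${\cal C}$ la classe des groupes finis r\'esolubles d'ordre premier \`a $p$ (classe stable par sous-groupes, quotients et extensions)~: alors $\pi_1^{res}(X,\overline x)$ est le plus grand quotient pro-${\cal C}$ de $\pi_1(X,\overline x)$, et $F_N^{res}$ est le groupe pro-${\cal C}$ libre de rang $N$. On montre que $\pi_1^{res}(X,\overline x)$ est de rang $N$. D'une part tout quotient fini de $\pi_1^{res}(X,\overline x)$ appartient \`a ${\cal C}$, donc est un quotient de $\pi_1(X,\overline x)$, donc est engendr\'e par au plus $N$ \'el\'ements d'apr\`es ${\cal P}_G$~; comme le rang d'un groupe profini est la borne sup\'erieure des $n_G$ pris sur ses quotients finis, $\pi_1^{res}(X,\overline x)$ est de rang $\le N$. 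D'autre part ${\cal P}_G$ appliqu\'ee \`a $G=(\mathbb{Z}/\ell)^N$ pour un premier $\ell\ne p$ (qui v\'erifie $n_G=N$) fournit une surjection de $\pi_1(X,\overline x)$, donc de $\pi_1^{res}(X,\overline x)$, sur $(\mathbb{Z}/\ell)^N$, d'o\`u le rang $\ge N$. Il existe par cons\'equent une surjection $\psi\colon F_N^{res}\twoheadrightarrow\pi_1^{res}(X,\overline x)$.

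Il reste \`a prouver que $\psi$ est injective. Puisque $\pi_1^{res}(X,\overline x)$ est pro-${\cal C}$ de rang $N$, il suffit, par la caract\'erisation usuelle des pro-${\cal C}$-groupes libres de rang fini, de v\'erifier que tout probl\`eme de plongement fini pour $\pi_1^{res}(X,\overline x)$, \`a noyau dans ${\cal C}$ et dont le groupe total $G$ v\'erifie $n_G\le N$, admet une solution surjective. Soit un tel probl\`eme, donn\'e par une surjection $\phi\colon\pi_1^{res}(X,\overline x)\twoheadrightarrow H$ et une suite exacte $1\to A\to G\to H\to 1$ avec $n_G\le N$. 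En raffinant $A$ en une suite $A=A_0\supset A_1\supset\cdots\supset A_k=\{1\}$ de sous-groupes normaux de $G$ \`a quotients successifs simples, et en r\'esolvant un \`a un les probl\`emes de plongement interm\'ediaires (chaque quotient de $G$ v\'erifie encore ``$n\le N$''), on se ram\`ene au cas o\`u $A$ est un $\mathbb{F}_\ell[H]$-module simple avec $\ell\ne p$. Ce cas se traite par le m\'ecanisme m\^eme de la preuve de la Proposition~\ref{state}~: vu comme morphisme de $\pi_1(X,\overline x)$, le morphisme $\phi$ se rel\`eve \`a travers $G\to H$ car l'obstruction vit dans $H^2(\pi_1(X,\overline x),A)$, qui est nul puisque $X$ est affine et $\ell\ne p$ (i.e. $\cd_\ell\pi_1(X,\overline x)\le 1$)~; comme $G$ est dans ${\cal C}$, le rel\`evement obtenu $\pi_1(X,\overline x)\to G$ se factorise par $\pi_1^{res}(X,\overline x)$ et rel\`eve $\phi$, et on le corrige enfin en une surjection gr\^ace \`a l'in\'egalit\'e $n_G\le N=\mathrm{rang}\,\pi_1^{res}(X,\overline x)$, par un argument \`a la Gasch\"utz. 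Ceci \'etablit l'injectivit\'e de $\psi$, donc le th\'eor\`eme.

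Le point principal est cette derni\`ere \'etape~: il faut passer de la simple \emph{r\'ealisabilit\'e} du groupe $G$, que donne ${\cal P}_G$, \`a la r\'esolution du probl\`eme de plongement, c'est-\`a-dire \`a la possibilit\'e de dominer un rev\^etement de groupe $H$ \emph{fix\'e}. C'est l'essentiel de la m\'ethode de \cite{JPS} reprise dans la Proposition~\ref{state} (annulation d'un groupe $H^2$ de la courbe affine \`a coefficients premiers \`a $p$, puis correction de surjectivit\'e contr\^ol\'ee par le nombre de g\'en\'erateurs)~; la r\'eduction aux probl\`emes de plongement minimaux et le suivi des rangs le long de la suite $(A_i)$ sont les d\'etails techniques \`a soigner.
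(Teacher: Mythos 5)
Votre architecture g\'en\'erale diff\`ere de celle du texte, dont la preuve est tr\`es courte : la Proposition~\ref{state}, jointe au d\'evissage, montre que les quotients finis de $\pi_1^{res}(X,\overline{x})$ et de $F_{2g+r-1}^{res}$ sont les m\^emes (\`a savoir les groupes finis r\'esolubles d'ordre premier \`a $p$ \`a au plus $2g+r-1$ g\'en\'erateurs), et l'on conclut par le fait que deux groupes profinis ayant m\^emes quotients finis, dont l'un est topologiquement de type fini, sont isomorphes (\cite{FJ2}, Proposition 16.10.7) ; seul l'\emph{\'enonc\'e} de la Proposition~\ref{state} est utilis\'e. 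Votre voie --- r\'esolution propre de tous les probl\`emes de plongement \`a noyau r\'esoluble $p'$ et \`a groupe total $G$ v\'erifiant $n_G\le N$, puis caract\'erisation \`a la Iwasawa des pro-${\cal C}$-groupes libres de rang fini --- est coh\'erente dans son plan (la v\'erification de ${\cal P}_G$ pour tout $G$ r\'esoluble $p'$ par r\'ecurrence, le calcul du rang et la r\'eduction aux noyaux irr\'eductibles sont corrects), mais elle requiert strictement plus que l'\'enonc\'e de la Proposition~\ref{state} : il lui faut la r\'esolubilit\'e \emph{propre} (surjective) des probl\`emes de plongement au-dessus d'un \'epimorphisme $\phi$ \emph{fix\'e}, qui est le contenu de la \emph{preuve} de cette proposition et non de son \'enonc\'e.

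C'est pr\'ecis\'ement l\`a que votre esquisse comporte une lacune. Lorsque l'extension $1\to A\to G\to H\to 1$ n'est pas scind\'ee, une solution faible est automatiquement propre (son image rencontre $A$ en un sous-$H$-module non nul, donc \'egal \`a $A$), et l'annulation de $H^2$ suffit. Mais dans le cas scind\'e $G=A\rtimes H$, la solution faible fournie par $\cd_l\le 1$ est sans contenu (il suffit de composer $\phi$ avec une section), et la correction en une surjection \emph{ne peut pas} \^etre un ``argument \`a la Gasch\"utz'' : le lemme de Gasch\"utz permet de relever des g\'en\'erateurs de $H$ en des g\'en\'erateurs de $G$, mais pour en faire un morphisme issu de $\pi_1(X,\overline{x})$ il faudrait d\'ej\`a savoir que ce groupe est libre, ce qui est la conclusion recherch\'ee. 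Formellement, la combinaison ``solution faible $+$ $n_G\le N=\mathrm{rang}$ $+$ $\cd_l\le 1$'' n'entra\^ine pas la r\'esolubilit\'e propre : prendre pour $F$ le groupe des entiers $2$-adiques, $\phi\colon F\twoheadrightarrow {\mathbb Z}/2$ et $G={\mathbb Z}/6$, o\`u $n_G=1=\mathrm{rang}(F)$, o\`u une solution faible existe, mais aucune solution propre. L'existence d'un rel\`evement surjectif dans le cas scind\'e est exactement l'objet des lemmes~\ref{lem3}, \ref{lem4} et \ref{lem5}, dont l'ingr\'edient d\'ecisif est le calcul de $\dim_{{\mathbb F}_l}H^1(X,\underline{A})$ par Grothendieck--Ogg--Shafarevich ; votre texte ne mentionne pas cet ingr\'edient et ne peut le remplacer par un argument purement groupiste. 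En substituant \`a l'``argument \`a la Gasch\"utz'' l'invocation explicite de ces lemmes, votre preuve devient correcte, mais elle revient alors \`a refaire la d\'emonstration de la Proposition~\ref{state}, et le d\'etour par les probl\`emes de plongement est superflu au regard de l'argument direct du texte.
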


L'ingr\'edient essentiel de la preuve est la formule de Grothendieck-Ogg-Shafarevich, elle-m\^eme d\'emontr\'ee par des arguments algébriques (voir \cite{R}), et dont on n'utilise ici que la forme modérée\footnote{Dans la direction oppos\'ee, il est int\'eressant de noter que la preuve initiale de la version mod\'er\'ee de cette formule reposait sur le th\'eor\`eme de structure du groupe fondamental, obtenu de mani\`ere transcendante (voir \cite{Milne} V Remark 2.19).}. Mentionnons enfin qu'une version nilpotente du th\'eor\`eme \ref{structure} a \'et\'e r\'ecemment prouv\'ee par Lieblich et Olsson (voir \cite{LO}).

\section{Preuves}
\subsection{D\'evissages}

On se ram\`ene au cas o\`u $A$ est un groupe ab\'elien
$l$-\'el\'ementaire $A$
(pour un nombre premier $l$ distinct de $p$), irr\'eductible pour l'action de $H$, gr\^ace aux deux lemmes suivants :
\begin{lem}
\label{devissageab}
Si la proposition \ref{state} est vraie pour tout groupe ab\'elien
$l$-\'el\'ementaire $A$, irr\'eductible pour l'action de $H$, elle est vraie
pour tout groupe ab\'elien $A$.
\end{lem}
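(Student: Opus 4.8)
The plan is to proceed by induction on $|A|$, peeling off from the bottom of $A$ a single subgroup that is $l$-elementary abelian and irreducible under the relevant action. If $A$ is trivial then $G \simeq H$ and ${\cal P}_G$ is just ${\cal P}_H$, so we may assume $A \neq 1$ and that the assertion of Proposition \ref{state} is already known for every abelian kernel of order strictly smaller than $|A|$.

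Since $A$ is a nontrivial finite group normal in $G$, the set of nontrivial subgroups of $A$ stable under conjugation by $G$ is nonempty (it contains $A$); choose among them one, say $B$, of minimal order. Two observations about $B$ will be used. First, $B$ is abelian, being a subgroup of $A$, and it is $G$-irreducible by minimality; hence for any prime $l$ dividing $|B|$ the characteristic subgroup $B[l]$ of elements annihilated by $l$ is a nonzero $G$-stable subgroup of $B$, so $B[l] = B$ and $B$ is $l$-elementary abelian, with $l \neq p$ because $|B|$ divides $|G|$, which is prime to $p$. Second, $B$ being abelian acts trivially on itself by conjugation, so the conjugation action of $G$ on $B$ factors through $G/B$, and $B$ is irreducible for this action of $G/B$.

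Now one distinguishes two cases. If $B = A$, then $G/B = H$ and the given extension $1 \rightarrow A \rightarrow G \rightarrow H \rightarrow 1$ has for kernel an $l$-elementary abelian group irreducible under the action of $H$, while $G$ is of order prime to $p$ and ${\cal P}_H$ is true; the hypothesis of the lemma applies directly and yields ${\cal P}_G$. If $B \subsetneq A$, one splits the situation into the two extensions $1 \rightarrow B \rightarrow G \rightarrow G/B \rightarrow 1$ and $1 \rightarrow A/B \rightarrow G/B \rightarrow H \rightarrow 1$. In the second, the kernel $A/B$ is abelian with $|A/B| < |A|$, the group $G/B$ is of order prime to $p$, and ${\cal P}_H$ holds; the induction hypothesis therefore gives that ${\cal P}_{G/B}$ is true. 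Applying then the hypothesis of the lemma to the first extension --- whose kernel $B$ is $l$-elementary abelian and irreducible for the action of $G/B$, with $G$ of order prime to $p$ and ${\cal P}_{G/B}$ now established --- one obtains ${\cal P}_G$, which completes the induction.

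I do not expect a genuine obstacle: this is a formal dévissage. The points that deserve a little care are the two structural facts about the minimal $G$-stable subgroup $B$, namely that $G$-irreducibility forces $B$ to be $l$-elementary abelian with $l$ prime to $p$, and that the conjugation action descends to $G/B$ so that "irreducible for the action of $G/B$" is both meaningful and true, together with the routine verification that Proposition \ref{state}, being quantified over all extensions with solvable kernel, prime-to-$p$ middle term and property ${\cal P}$ for the quotient, may indeed be invoked for the two auxiliary extensions introduced above.
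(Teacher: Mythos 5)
Your proof is correct, but it organizes the d\'evissage differently from the paper. The paper performs three successive reductions, each an induction powered by Lemma~\ref{outil}: first on the number of Sylow subgroups of $A$ (reducing to an abelian $l$-group), then on $|A|$ via the characteristic subgroup $A^l$ (reducing to an $l$-\'el\'ementaire group), and finally on the length of a composition series of $A$ as ${\mathbb F}_l[H]$-module (reducing to the irreducible case). You instead run a single induction on $|A|$ and peel off a \emph{minimal} nontrivial $G$-stable subgroup $B\subset A$, observing that minimality automatically forces $B$ to be $l$-\'el\'ementaire (since $B[l]$ is characteristic in $B$, hence normal in $G$, hence equal to $B$) and irreducible for the induced action of $G/B$ (the conjugation action factoring through $G/B$ because $B$ is abelian); the chaining ${\cal P}_H\Rightarrow{\cal P}_{G/B}\Rightarrow{\cal P}_G$ that you carry out is exactly the content of the paper's Lemma~\ref{outil}, which you reprove in passing. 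Both routes rest on the same mechanism; yours is more economical (one induction instead of three nested ones) at the price of the small structural argument identifying the minimal $G$-stable subgroup as elementary abelian and irreducible, whereas the paper's staged reduction only ever quotients by completely explicit subgroups (Sylow factors, $A^l$, terms of a composition series). Your closing caveat about the quantification in Proposition~\ref{state} is well taken and correctly resolved: the hypothesis of the lemma applies to the extension $1\to B\to G\to G/B\to 1$ because it is quantified over all extensions with the stated properties, not only those with quotient the original $H$.
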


\begin{lem}
\label{devissageres}
Si la proposition \ref{state} est vraie pour tout groupe ab\'elien
$A$, elle est vraie pour tout groupe r\'esoluble $A$.
\end{lem}

Introduisons quelques notations. Pour une suite exacte de groupes finis
$$(S) \quad 1\to A \to G \to H \to 1$$
notons $(\star ) _S$ la propri\'et\'e ${\cal P } _H \Rightarrow {\cal P}_G$.

La preuve des lemmes repose sur la remarque imm\'ediate suivante dont on omet la preuve :

\begin{lem} \label{outil}
On se donne la suite exacte 
$$(S) \quad 1\to A \to G \to H \to 1$$
et un sous-groupe $A_1$ de $A$ distingu\'e dans $G$ et on note $A_2 = A/A_1$. On dispose alors de deux nouvelles suites exactes 
$$(S_2) \quad 1 \to A_2 \to G/A_1 \to H \to 1$$
$$(S_1) \quad 1\to A_1 \to G \to G/A_1 \to 1$$
Alors si $(\star ) _{S_1}$ et $(\star ) _{S_2}$ sont vraies, il en est de m\^eme de $(\star )_S$.
\end{lem}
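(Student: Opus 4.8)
La proposition est de nature purement formelle. Le point \`a observer est que les propri\'et\'es ${\cal P}_G$ et ${\cal P}_H$ ne d\'ependent que des groupes finis $G$ et $H$, et non de la donn\'ee d'une extension : ainsi $(\star)_S$, $(\star)_{S_1}$ et $(\star)_{S_2}$ ne sont rien d'autre que des implications entre propri\'et\'es de ce type, que l'on souhaite composer. Le plan est donc de constater que $G/A_1$ joue \`a la fois le r\^ole du quotient dans $(S_1)$ et celui du groupe du milieu dans $(S_2)$, puis d'encha\^iner les deux implications.

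Je commencerais par v\'erifier que les deux suites auxiliaires sont bien d\'efinies. Comme $A_1$ est distingu\'e dans $G$ par hypoth\`ese, $G/A_1$ a un sens et la projection $G \twoheadrightarrow G/A_1$ a pour noyau $A_1$ : c'est $(S_1)$. En composant $G \twoheadrightarrow G/A_1$ avec $G/A_1 \twoheadrightarrow G/A = H$, on voit que cette seconde fl\`eche a pour noyau $A/A_1 = A_2$, lequel est distingu\'e dans $G/A_1$ puisque $A$ l'est dans $G$ : c'est $(S_2)$. Aucune donn\'ee suppl\'ementaire n'intervient.

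Il ne reste alors qu'\`a encha\^iner. Par d\'efinition, $(\star)_{S_2}$ affirme ${\cal P}_H \Rightarrow {\cal P}_{G/A_1}$, et $(\star)_{S_1}$ affirme ${\cal P}_{G/A_1} \Rightarrow {\cal P}_G$ ; en les supposant toutes deux vraies, la transitivit\'e de l'implication donne ${\cal P}_H \Rightarrow {\cal P}_G$, c'est-\`a-dire $(\star)_S$. Je n'attends ici aucun obstacle : c'est pr\'ecis\'ement ce qui justifie que le lemme soit qualifi\'e d'\emph{imm\'ediat}. Le vrai travail viendra dans les lemmes \ref{devissageab} et \ref{devissageres}, o\`u il faudra choisir le sous-groupe distingu\'e $A_1$ avec soin — typiquement une sous-repr\'esentation $H$-irr\'eductible minimale lorsque $A$ est ab\'elien, ou un terme de la suite d\'eriv\'ee de $A$ lorsque $A$ est r\'esoluble — de sorte que les hypoth\`eses de r\'ecurrence s'appliquent effectivement \`a $(S_1)$ et \`a $(S_2)$ ; mais ce n'est pas ce qui est en jeu dans le pr\'esent lemme.
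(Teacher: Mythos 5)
Your proof is correct and is exactly the immediate argument the paper has in mind (the paper omits the proof, calling the lemma an immediate remark): since ${\cal P}$ depends only on the group and not on the choice of extension, $(\star)_{S_2}$ reads ${\cal P}_H \Rightarrow {\cal P}_{G/A_1}$, $(\star)_{S_1}$ reads ${\cal P}_{G/A_1} \Rightarrow {\cal P}_G$, and composing the two implications yields $(\star)_S$. Your preliminary check that the two auxiliary sequences are well defined is also the right thing to record, and nothing further is needed.
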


\subsubsection{Preuve du Lemme \ref{devissageab}}

\begin{proof}
Un groupe ab\'elien $A$ d'ordre premier \`a $p$ est le produit direct de ses $l$-sous-groupes de Sylow, qui sont caract\'eristiques, et donc distingu\'es dans $G$. On peut donc se ramener dans un premier temps, par un raisonnement par r\'ecurrence sur le nombre de Sylow de $A$, et  grâce au lemme \ref{outil} \`a un $l$-groupe ab\'elien $A$. Puis, en raisonnant par r\'ecurrence sur l'ordre de $A$, et en appliquant le lemme \ref{outil} \`a $A_1= A^l$, on est ramen\'e au cas de $l$-groupes ab\'eliens \'el\'ementaires. Enfin, une nouvelle r\'ecurrence sur le nombre de facteurs irr\'eductibles dans la d\'ecomposition de $A$ comme ${\mathbb F} _l [H]$-module, permet en utilisant encore le lemme \ref{outil} de se ramener au cas o\`u $A$ est irr\'eductible en tant que repr\'esentation de $H$.
\end{proof}

\subsubsection{Preuve du Lemme \ref{devissageres}}

\begin{proof}
La preuve est similaire \`a la pr\'ec\'edente. 
On raisonne par r\'ecurrence sur la longueur de la s\'erie d\'eriv\'ee de $A$, et en utilisant le lemme \ref{outil}.

\end{proof}

\subsection{Rel\`evement de revêtements d'une courbe affine}

On reprend les notations de l'introduction : on consid\`ere un corps alg\'ebriquement clos $k$ de caract\'eristique 
$p$ (\'eventuellement nulle), une courbe alg\'ebrique \emph{affine} lisse et connexe $X$ sur $k$, un point g\'eom\'etrique $\overline{x}$ .

On se donne un morphisme surjectif $\pi_1(X,\overline{x})\twoheadrightarrow H$ où $H$ est un groupe
fini et le revêtement galoisien connexe correspondant $\pi :Y \rightarrow X$, qu'on munit du point g\'eom\'etrique associ\'e $\overline{y}$ au dessus de $\overline{x}$. On fixe de plus un ${\mathbb F}_l[H]$-module $A$ de type fini irr\'eductible.

Comme $X$ est connexe, on a une \'equivalence de cat\'egorie classique entre syst\`emes locaux (pour la topologie \'etale) de ${\mathbb F}_l$-espaces vectoriels de dimension finie d'une part, et repr\'esentations de $\pi_1(X,\overline{x})$ \`a valeurs dans les ${\mathbb F}_l$-espaces vectoriels de dimension finie d'autre part. Cette \'equivalence envoie un faisceau localement constant $F$ sur sa fibre $F_{\overline{x}}$ en $\overline{x}$. De plus, $H^1(X, F)\simeq H^1(\pi_1(X,\overline{x}), F_{\overline{x}})$ (\cite{JPS}, preuve de la Proposition 1).

Ainsi au groupe $A$ vu comme repr\'esentation sur ${\mathbb F}_l$ de $\pi_1(X,\overline{x})$ \`a travers le morphisme $\pi_1(X,\overline{x})\twoheadrightarrow H$, est naturellement associ\'e un faisceau \'etale localement constant $\underline{A}$ trivialis\'e par $\pi :Y \rightarrow X$. Il s'agit simplement du faisceau étale $\pi_*^H(A_Y)$
donné explicitement par $(U\rightarrow X)\rightarrow(A_Y(\pi^{-1}U))^H$, où $A_Y$ est le faisceau constant sur $Y$ associ\'e au ${\mathbb F}_l$-espace vectoriel sous-jacent \`a $A$.

\begin{lem}
  \label{lem1}
  On a une suite exacte :
$$0 \rightarrow H^1(H,A) \rightarrow H^1(X,\underline{A}) \rightarrow
  \Hom_H(\pi_1^{ab}(Y), A) \rightarrow  H^2(H,A) \rightarrow
  0$$
 \end{lem}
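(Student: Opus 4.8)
Le plan est de reconna\^itre la suite exacte cherch\'ee comme la suite exacte des termes de bas degr\'e de la suite spectrale de Hochschild--Serre du rev\^etement galoisien fini \'etale $\pi : Y \to X$ de groupe $H$, l'annulation \`a droite provenant du caract\`ere affine de $X$.

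Je commencerais par observer que $Y$, \'etant un rev\^etement fini de la courbe affine $X$, est lui aussi une courbe affine lisse et connexe. Le th\'eor\`eme d'annulation d'Artin pour la cohomologie \'etale des sch\'emas affines sur un corps s\'eparablement clos (voir \cite{Milne}) --- \'enonc\'e purement alg\'ebrique --- donne alors $H^q(X,\underline A) = 0$ et $H^q(Y,\mathcal F) = 0$ pour tout $q\geq 2$ et tout faisceau constructible de $\mathbb F_l$-espaces vectoriels $\mathcal F$ ($l\neq p$) ; en particulier $H^2(X,\underline A) = 0$.

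J'\'ecrirais ensuite la suite spectrale de Hochschild--Serre associ\'ee \`a $\pi$,
$$E_2^{p,q} = H^p\big(H,\, H^q(Y,\pi^*\underline A)\big) \Longrightarrow H^{p+q}(X,\underline A) .$$
Comme $\underline A$ est trivialis\'e par $\pi$, le faisceau $\pi^*\underline A$ est le faisceau constant $A_Y$. Puisque $Y$ est connexe, $H^0(Y,A_Y) = A$ muni de l'action donn\'ee de $H$ ; et comme $\pi_1(Y,\overline y)$ agit trivialement sur $A$, la comparaison en degr\'e $1$ d\'ej\`a invoqu\'ee identifie $H^1(Y,A_Y)$ \`a $\Hom(\pi_1(Y,\overline y),A) = \Hom(\pi_1^{ab}(Y),A)$, de mani\`ere $H$-\'equivariante. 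Par l'annulation d'Artin sur $Y$, les lignes $q\geq 2$ de $E_2$ sont nulles, de sorte que la suite spectrale se r\'eduit \`a deux lignes et que la seule diff\'erentielle \`a consid\'erer est $d_2 : E_2^{0,1}\to E_2^{2,0}$, c'est-\`a-dire $d_2 : \Hom_H(\pi_1^{ab}(Y),A) \to H^2(H,A)$.

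Il ne resterait plus qu'\`a lire les termes de bas degr\'e : $E_\infty^{1,0} = E_2^{1,0} = H^1(H,A)$, $E_\infty^{0,1} = \ker d_2$ et $E_\infty^{2,0} = \operatorname{coker} d_2$, la filtration de $H^1(X,\underline A)$ fournissant la suite exacte courte $0\to H^1(H,A)\to H^1(X,\underline A)\to \ker d_2\to 0$, tandis que $H^2(X,\underline A)=0$ entra\^ine $E_\infty^{2,0}=0$, soit la surjectivit\'e de $d_2$ ; en recollant ces deux informations on obtient exactement la suite annonc\'ee. La partie d\'elicate n'est pas le formalisme spectral, automatique ici, mais d'obtenir le bon z\'ero \`a droite --- ce qui repose enti\`erement sur $H^2(X,\underline A)=0$, donc sur le th\'eor\`eme d'Artin et le caract\`ere affine de $X$ --- ainsi que de justifier la $H$-\'equivariance de l'identification $H^1(Y,A_Y)\simeq\Hom(\pi_1^{ab}(Y),A)$, qui d\'ecoule de la fonctorialit\'e de la comparaison entre cohomologie \'etale et cohomologie du groupe fondamental.
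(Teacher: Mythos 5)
Votre preuve est correcte et suit essentiellement la m\^eme route que celle du papier : la suite exacte des termes de bas degr\'e de la suite spectrale de Hochschild--Serre associ\'ee \`a $\pi$, l'identification $H$-\'equivariante $H^1(Y,A_Y)^H\simeq\Hom_H(\pi_1^{ab}(Y),A)$, et l'annulation $H^2(X,\underline{A})=0$ due au fait qu'une courbe affine est de dimension cohomologique $1$. La r\'eduction pr\'ealable \`a deux lignes (annulation des lignes $q\geq 2$ sur $Y$) est inoffensive mais superflue, la suite \`a cinq termes existant pour toute suite spectrale de premier quadrant.
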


 \begin{proof}
	 Il s'agit de la suite exacte courte de bas degr\'e associ\'ee \`a la suite spectrale de Hochschild-Serre $H^p(H,H^q(Y,A_Y))\Longrightarrow H^{p+q}(X,\underline{A})$ (voir appendice \ref{HS}). Pour identifier le troisi\`eme terme $H^1(Y,A_Y)^H$ de la suite on utilise l'isomorphisme $H^1(Y,A_Y)\simeq H^1(\pi_1(Y,\overline{y}),A)=\Hom(\pi_1(Y,\overline{y})^{ab},A)$, $A$ \'etant trivial comme $\pi_1(Y,\overline{y})$-module. L'annulation du cinqui\`eme terme, \`a savoir $H^2(X,\underline{A})$, r\'esulte de l'hypoth\`ese que $X$ est affine et donc de dimension cohomologique $1$ (\cite{SGA4} IX 5.7, X 5.2).
\end{proof}

\begin{lem}\label{lem2}
Le morphisme de transgression ${\rm trans}: {\rm
 Hom}_H(\pi_1^{ab}(Y),A) \rightarrow 
 H^2(H,A)$ dans la suite ci-dessus peut-être d\'ecrit comme suit : soit $u
 \in  {\rm Hom}_H(\pi_1^{ab}(Y),A)$, $u\neq 0$. Alors
 $u$ correspond canoniquement \`a un revêtement galoisien $Z\twoheadrightarrow X$
 dont le groupe de Galois est une extension de $H$ par $A$,
 et ${\rm trans}(u)$ est l'oppos\'e de la classe de cette extension dans
 $H^2(H,A)$.
 \end{lem}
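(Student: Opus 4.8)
The plan is to unwind the Hochschild–Serre spectral sequence attached to the Galois covering $\pi:Y\to X$ with group $H$, and to identify the transgression $d_2:H^1(Y,A_Y)^H\to H^2(H,A)$ with the obstruction to descending a covering of $Y$ to a covering of $X$. First I would fix a nonzero $u\in\Hom_H(\pi_1^{ab}(Y),A)=\Hom_H(\pi_1(Y,\overline y),A)$; since $A$ is a trivial $\pi_1(Y,\overline y)$-module, $u$ is a genuine group homomorphism, and I form the pushout extension
$$1\to A\to E_u\to \pi_1(Y,\overline y)\to 1,$$
i.e.\ the extension of $\pi_1(Y,\overline y)$ by $A$ classified by $u$ via the identification $\Hom(\pi_1(Y,\overline y),A)\simeq H^1(\pi_1(Y,\overline y),A)=\mathrm{Ext}^1_{\mathbb Z}$ (here $A$ trivial, so the extension is abelian, namely $E_u=A\times_u\pi_1(Y,\overline y)$ glued along $u$). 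Concretely this gives a connected Galois covering $Z\to Y$ with group $A$, and the content is that the $H$-equivariance of $u$ is exactly what is needed for $Z\to Y\to X$ to be, after the right choice, a Galois covering $Z\to X$.

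The key step is the $2$-extension/crossed-module computation. Consider $1\to\pi_1(Y,\overline y)\to\pi_1(X,\overline x)\to H\to 1$. Pulling back the $A$-extension $E_u$ of $\pi_1(Y,\overline y)$ along the inclusion and pushing out, one wants a group $\Gamma$ fitting into a commutative diagram of short exact sequences with rows $1\to A\to\Gamma\to\pi_1(X,\overline x)\to 1$ restricting to $E_u$ over $\pi_1(Y,\overline y)$; equivalently one seeks an extension $1\to A\to \mathrm{Gal}(Z/X)\to H\to 1$ whose restriction to $\pi_1(Y,\overline y)$-part recovers $u$. The standard homological fact — this is the Hochschild–Serre edge-map computation, see e.g.\ the Lyndon/Hochschild–Serre description of $d_2$ on the line $E_2^{0,1}$ — is that the obstruction to finding such a $\Gamma$, measured in $H^2(H,A)$, is precisely $\mathrm{trans}(u)$, and that when this obstruction vanishes the set of solutions is a torsor under the image of $H^1(X,\underline A)$. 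I would make this explicit by choosing a set-theoretic section $s:H\to\pi_1(X,\overline x)$, writing the cocycle of the putative extension $\Gamma$ in terms of $s$, $u$, and the conjugation action, and observing that the $2$-cocycle condition fails exactly by the coboundary whose class is $d_2(u)$; the sign is where the word ``l'oppos\'e'' enters, coming from the standard sign convention in the differential of the spectral sequence (equivalently, from whether one reads the extension $Z\to X$ or its ``inverse'' class).

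Translating back to geometry: the homomorphism $\pi_1(X,\overline x)\to H$ together with $u$ and a trivialization of the obstruction assembles into a homomorphism $\pi_1(X,\overline x)\to E$ onto an extension $1\to A\to E\to H\to 1$, hence a connected Galois covering $Z\to X$ with $\mathrm{Gal}(Z/X)=E$ sitting over $Y\to X$; conversely any such covering restricting to the $A$-covering $Z\to Y$ determined by $u$ has Galois group an extension of $H$ by $A$, and its class in $H^2(H,A)$ is read off from the same cocycle. The main obstacle is purely bookkeeping: pinning down the identification of the edge differential $d_2$ with the extension-class map together with the correct sign, i.e.\ verifying that the abstract transgression of the spectral sequence coincides with the connecting map in the five-term sequence of the group extension $1\to\pi_1(Y,\overline y)\to\pi_1(X,\overline x)\to H\to 1$ applied to the coefficient sequence — a comparison that is classical (Hochschild–Serre) but must be stated with care so that ``$\mathrm{trans}(u)=-[\,E\,]$'' comes out with the asserted sign.
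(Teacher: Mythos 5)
Your overall strategy is the same as the paper's: replace the \'etale five-term sequence by the inflation--restriction sequence of $1\to\pi_1(Y,\overline y)\to\pi_1(X,\overline x)\to H\to 1$ (which requires exactly the comparison you flag at the end, justified in the paper by $H^i(X,\underline{A})\simeq H^i(\pi_1(X,\overline x),A)$ and $\cd \pi_1(X,\overline x)=1$), and then identify the transgression of $u$ with the class of the extension $1\to A\to\pi_1(X,\overline x)/\ker u\to H\to 1$. But the central identity $\mathrm{trans}(u)=-[\,E\,]$ is precisely the content of the lemma, and your proposal does not establish it: you assert that a cocycle computation with a set-theoretic section \emph{would} produce it, and you explicitly defer the sign to ``bookkeeping''. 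The paper closes exactly this gap by quoting the formula of Hochschild--Serre: if $\gamma\in H^2(\mathbf H,\mathbf F/\mathbf F')$ is the class of $1\to\mathbf F/\mathbf F'\to\mathbf G/\mathbf F'\to\mathbf H\to 1$, then $\mathrm{trans}(u)=-\gamma\cup u$ for $u\in H^0(\mathbf H,\Hom(\mathbf F/\mathbf F',A))$; since $\gamma\cup u=u_*\gamma$ is the class of the pushout extension, i.e.\ of $\mathrm{Gal}(Z/X)$, the lemma follows. Either cite this formula or actually carry out the cocycle computation; as written, the argument stops just short of its only nontrivial step.

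Two local inaccuracies should also be repaired. First, $u$ does not classify an extension of $\pi_1(Y,\overline y)$ by $A$: extensions are classified by $H^2$, not by $H^1\simeq\Hom$ (trivial coefficients). The covering $Z\to Y$ is simply the one corresponding to the open subgroup $\ker u$; it is connected and Galois of group $A$ because $u$ is surjective, which follows from $u\neq 0$ together with the irreducibility of $A$ (a point worth stating, as is the fact that the $H$-equivariance of $u$ makes $\ker u$ normal in $\pi_1(X,\overline x)$, so that $Z\to X$ is Galois). Second, $\mathrm{trans}(u)$ is not an obstruction whose vanishing is needed for $Z\to X$ to exist: the quotient $E=\pi_1(X,\overline x)/\ker u$ always exists as an extension of $H$ by $A$, and $\mathrm{trans}(u)$ computes (minus) its class; what the vanishing of $\mathrm{trans}(u)$ governs is the splitting of that extension, not its existence.
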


 \begin{proof}
Comme la dimension cohomologique de $\pi_1(X,\overline{x})$ est $1$ (\cite{JPS}, Proposition 1), on peut, alternativement, consid\'erer la suite exacte du lemme \ref{lem1} comme la suite exacte d'inflation-restriction relative au module $A$ et \`a la suite exacte $1\rightarrow  \pi_1(Y,\overline{y})\rightarrow  \pi_1(X,\overline{x})\rightarrow H\rightarrow 1$. On conclut grâce au fait suivant (pour lequel on renvoie \`a \cite{HS}, qui traite le cas des groupes abstraits) : soit $1 \rightarrow {\bf F} \rightarrow {\bf G} \rightarrow {\bf H} \rightarrow 1$ une suite exacte de groupes profinis, $A$ un groupe ab\'elien discret \'equip\'e d'une action continue de ${\bf H}$.  Soit de plus ${\bf F'}$ le sous-groupe d\'eriv\'e de ${\bf F}$, et $\gamma \in H^2({\bf H},
	 {\bf F}/{\bf F'})$ la classe de l'extension :
$ 1 \rightarrow {\bf F}/{\bf F'} \rightarrow  {\bf G}/{\bf F'} \rightarrow {\bf H} \rightarrow 1$. Alors le morphisme de transgression  ${\rm trans} : H^1({\bf F}, A)^{\bf H} \rightarrow H^2({\bf H}, A)$ 
(c'est-\`a-dire le quatri\`eme morphisme dans la suite exacte d'inflation-restriction) est donn\'e explicitement par :
$$ \forall u \in  H^1({\bf F}, A)^{\bf H}=H^0({\bf H}, {\rm Hom}({\bf
  F}/{\bf F'}, A))\;\;\;\; {\rm trans}(u) = -\gamma \cup u $$

 \end{proof}

\subsection{Preuve de la Proposition \ref{state}}

Supposons ${\cal P}_H$
vraie. Le cas où les deux assertions de ${\cal P}_H$ sont fausses est \'evident.
On supposera donc, dans ce paragraphe, que ces deux affirmations sont vraies \`a savoir $n_H\leq 2g+r-1 $ et $ \exists \pi_1(X,\underline{x})\twoheadrightarrow H$. Il faut v\'erifier ${\cal P}_G$ :
$$n_G\leq 2g+r-1 \iff \exists \pi_1(X,\underline{x})\twoheadrightarrow G$$

\subsubsection{Preuve de la Proposition \ref{state} lorsque $G$ n'est pas produit semi-direct}

La classe de la suite exacte $1\to A \to G \to H \to 1$ dans $H^2 (H,A)$ n'est pas triviale, et son oppos\'ee se rel\`eve donc en un \'el\'ement non trivial de $Hom _H ( \pi _1^{ab} (Y), A)$
, qui est surjectif car $A$ est irr\'eductible
; ce dernier correspond \`a un rev\^etement galoisien 
connexe
$Z\to Y$ de groupe $A$, tel que le compos\'e $Z\to Y \to X$ soit galoisien de groupe $G$ et tel que la suite exacte correspondante de groupes de Galois soit la suite exacte de d\'epart d'apr\`es le lemme \ref{lem2}. Cela montre en particulier que la propri\'et\'e $\exists \pi_1(X,\underline{x})\twoheadrightarrow G$ est toujours vraie.

Il s'agit donc de v\'erifier que 
$n_G\leq 2g+r-1$ 
est vrai. Cela r\'esulte du fait que 
$n_G= n_H\leq 2g+r-1$ car $G$ n'est pas produit semi-direct. 
En effet, soient $\tilde{x_1},\cdots,\tilde{x_{n_H}}$ des relev\'es dans $G$ d'un syst\`eme minimal de g\'en\'erateurs $x_1,\cdots,x_{n_H}$ de $H$, et $\tilde{G}<G$ le sous-groupe qu'ils engendrent. Le sous-groupe $A\cap \tilde{G}$ de $A$ est stable pour l'action de $H$, et du fait de l'irr\'eductibilit\'e de $A$, $A\cap \tilde{G} = A$ ou $A\cap \tilde{G} = 1$. Cette derni\`ere \'egalit\'e est impossible du fait que $G$ n'est pas produit semi-direct. Donc $A<\tilde{G}$ et $\tilde{G}=G$.

\subsubsection{Preuve de la Proposition \ref{state} lorsque $G$ est produit semi-direct : pr\'eliminaire}

Si $G$ est produit semi-direct de $A$ par $H$, on d\'eduit du lemme \ref{lem1} le lemme suivant :

\begin{lem}
  \label{lem3}
  Le probl\`eme de plongement
$$\xymatrix{ &&& \pi_1(X,\underline{x}) \ar@{->>}[d]\ar@{-->}[dl] \\1 \ar[r] & A \ar[r] & G \ar[r]^{q} &  H \ar[r]  & 1  }  $$     
a une solution forte \footnote{On rappelle que cela signifie qu'il existe un \'epimorphisme $\pi_1(X,\underline{x})\rightarrow G$ faisant commuter le diagramme donn\'e.} si et seulement si
$\dim_{{\mathbb F}_l}H^1(X,\underline{A})>\dim_{{\mathbb F}_l}H^1(H,A)$.
\end{lem}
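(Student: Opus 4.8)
L'idée est d'exploiter l'exactitude de la suite du lemme \ref{lem1} dans le cas où $G = A \rtimes H$ est un produit semi-direct. Commençons par la traduction cohomologique du problème de plongement. Puisque $G$ est produit semi-direct, la classe de l'extension dans $H^2(H,A)$ est nulle, donc par le lemme \ref{lem1} la flèche $H^1(X,\underline{A}) \twoheadrightarrow \Hom_H(\pi_1^{ab}(Y),A)$ est surjective et son noyau est exactement l'image de $H^1(H,A)$, qui s'y injecte ; on a ainsi
$$\dim_{{\mathbb F}_l}\Hom_H(\pi_1^{ab}(Y),A) = \dim_{{\mathbb F}_l}H^1(X,\underline{A}) - \dim_{{\mathbb F}_l}H^1(H,A).$$
La condition de l'énoncé équivaut donc à $\Hom_H(\pi_1^{ab}(Y),A) \neq 0$.

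Il reste à voir que l'existence d'une solution forte au problème de plongement équivaut à la non-nullité de $\Hom_H(\pi_1^{ab}(Y),A)$. Pour le sens direct, si $s:\pi_1(X,\overline{x}) \twoheadrightarrow G$ relève $\pi_1(X,\overline{x})\twoheadrightarrow H$, alors la restriction de $s$ à $\pi_1(Y,\overline{y}) = \ker(\pi_1(X,\overline{x})\to H)$ est à valeurs dans $A = \ker(q)$, et la surjectivité de $s$ force cette restriction $\pi_1(Y,\overline{y}) \to A$ à être surjective (l'image, jointe à un relèvement d'un système de générateurs de $H$, doit engendrer $G$ ; on utilise ici l'irréductibilité de $A$ comme dans le cas non produit semi-direct, ou simplement le fait que $A$ est engendré par les conjugués de l'image). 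Cette restriction est $H$-équivariante et se factorise par $\pi_1^{ab}(Y)$, fournissant un élément non nul de $\Hom_H(\pi_1^{ab}(Y),A)$. Réciproquement, partant d'un $0 \neq u \in \Hom_H(\pi_1^{ab}(Y),A)$, on l'interprète comme un élément de $H^1(Y,A_Y)^H$ ; comme sa transgression dans $H^2(H,A)$ est nulle (l'extension $G$ étant scindée, cf. lemme \ref{lem2}), il provient d'une classe de $H^1(X,\underline{A})$, c'est-à-dire d'un homomorphisme $\pi_1(X,\overline{x})\to A\rtimes H$ au-dessus de $H$ — ici on utilise la description de $H^1(X,\underline{A}) = H^1(\pi_1(X,\overline{x}),A)$ comme classes de sections à conjugaison près de l'extension $1\to A \to \pi_1(X,\overline{x})\ltimes A \to \pi_1(X,\overline{x})\to 1$, combinée avec le morphisme $\pi_1(X,\overline{x})\to H$. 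La restriction de cet homomorphisme à $\pi_1(Y,\overline{y})$ redonne $u$, qui est surjectif sur $A$ puisque non nul et $A$ irréductible ; donc l'homomorphisme $\pi_1(X,\overline{x})\to G$ est lui-même surjectif, ce qui fournit la solution forte.

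Le point délicat sera la seconde implication : il faut soigneusement passer de la classe de cohomologie dans $H^1(X,\underline{A})$ à un véritable homomorphisme de groupes $\pi_1(X,\overline{x}) \to G$ relevant $\pi_1(X,\overline{x})\to H$, et vérifier que sa restriction à $\pi_1(Y,\overline{y})$ est bien $u$ (à automorphisme intérieur près). C'est ici qu'interviennent de façon cruciale l'identification $H^1(X,\underline{A}) \simeq H^1(\pi_1(X,\overline{x}),A)$ rappelée avant le lemme \ref{lem1}, l'interprétation standard du $H^1$ non abélien (ou abélien tordu) en termes de relèvements de sections d'une extension scindée, et la compatibilité de la restriction à $\pi_1(Y,\overline{y})$ avec le cinquième terme de la suite de Hochschild–Serre ; l'irréductibilité de $A$ sert à convertir « non nul » en « surjectif », donc « homomorphisme » en « épimorphisme ».
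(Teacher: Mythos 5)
Votre preuve contient une erreur r\'eelle d\`es le d\'ebut, qui se propage ensuite. Vous affirmez que, $G$ \'etant produit semi-direct, la classe de l'extension dans $H^2(H,A)$ est nulle et que, par cons\'equent, la fl\`eche $H^1(X,\underline{A}) \to \Hom_H(\pi_1^{ab}(Y),A)$ de la suite du lemme \ref{lem1} est surjective. C'est un non sequitur. La surjectivit\'e de cette fl\`eche \'equivaut \`a la nullit\'e de la transgression $\mathrm{trans}\colon \Hom_H(\pi_1^{ab}(Y),A)\to H^2(H,A)$ ; or, d'apr\`es le lemme \ref{lem2}, $\mathrm{trans}(u)=-\gamma\cup u$ o\`u $\gamma$ est la classe de l'extension $1\to \pi_1^{ab}(Y)\to \pi_1(X,\overline{x})/[\pi_1(Y,\overline{y}),\pi_1(Y,\overline{y})]\to H\to 1$, un invariant du rev\^etement $Y\to X$ qui n'a rien \`a voir avec la classe de $G$. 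Comme la suite du lemme \ref{lem1} se termine par $\to H^2(H,A)\to 0$, la transgression est m\^eme toujours surjective : d\`es que $H^2(H,A)\neq 0$, il existe des $u\neq 0$ de transgression non nulle, et ceux-l\`a ne fournissent pas de solution forte pour $G=A\rtimes H$ puisque, d'apr\`es le lemme \ref{lem2}, le rev\^etement $Z\to X$ qui leur est associ\'e a pour groupe de Galois l'extension de classe $-\mathrm{trans}(u)\neq 0$, qui n'est pas $G$. Ce sont d'ailleurs pr\'ecis\'ement ces $u$ qui servent dans le cas o\`u $G$ n'est pas produit semi-direct. Votre formule $\dim_{{\mathbb F}_l}\Hom_H(\pi_1^{ab}(Y),A)=\dim_{{\mathbb F}_l} H^1(X,\underline{A})-\dim_{{\mathbb F}_l} H^1(H,A)$ est fausse pour la m\^eme raison : il y manque le terme $+\dim_{{\mathbb F}_l} H^2(H,A)$, et la condition de l'\'enonc\'e n'\'equivaut donc pas \`a $\Hom_H(\pi_1^{ab}(Y),A)\neq 0$.

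L'argument correct, qui est celui du papier, consiste \`a faire correspondre les solutions fortes aux \'el\'ements non nuls de $\ker(\mathrm{trans})$, et non de $\Hom_H(\pi_1^{ab}(Y),A)$ tout entier : la non-nullit\'e de $u$ entra\^{\i}ne sa surjectivit\'e sur $A$ par irr\'eductibilit\'e, et la condition $\mathrm{trans}(u)=0$ garantit, via le lemme \ref{lem2}, que le groupe de Galois du rev\^etement $Z\to X$ associ\'e est bien l'extension scind\'ee $G$ et non une autre extension de $H$ par $A$. L'exactitude de la suite du lemme \ref{lem1} donne alors exactement $\dim_{{\mathbb F}_l}\ker(\mathrm{trans})=\dim_{{\mathbb F}_l} H^1(X,\underline{A})-\dim_{{\mathbb F}_l} H^1(H,A)$, d'o\`u l'\'equivalence annonc\'ee. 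Votre sens direct (une solution forte donne $u\neq 0$) est correct mais incomplet : il faut aussi observer que ce $u$ appartient \`a $\ker(\mathrm{trans})$. Votre sens r\'eciproque doit \^etre restreint aux $u\in\ker(\mathrm{trans})$, et l'argument selon lequel la transgression de tout $u$ est nulle parce que l'extension $G$ est scind\'ee doit \^etre abandonn\'e ; tel quel, votre texte aboutit \`a la bonne conclusion uniquement parce que deux erreurs issues de la m\^eme confusion se compensent.
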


\begin{proof} Une solution forte du probl\`eme de plongement correspond en effet \`a un \'el\'ement non trivial de $Hom_H( \pi _1 ^{ab} (Y), A)$ s'envoyant sur la classe triviale de $H^2 (H,A)$. 
D'apr\`es le lemme \ref{lem1}, un tel \'el\'ement existe si et seulement si $\dim_{{\mathbb F}_l}H^1(X,\underline{A})>\dim_{{\mathbb F}_l}H^1(H,A)$.
\end{proof}
\subsubsection{Calcul de $\dim_{{\mathbb F}_l}H^1(X,\underline{A})$}

\begin{lem}
  \label{lem4}
Si 
$H$ est un $p'$-groupe alors :
$$\dim_{{\mathbb F}_l}H^1(X,\underline{A})=(2g+r-2)\dim_{{\mathbb
    F}_l}A+\dim_{{\mathbb F}_l}A^H\; .$$
\end{lem}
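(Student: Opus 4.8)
The plan is to compute $\dim_{\mathbb{F}_l} H^1(X,\underline{A})$ by combining an Euler-characteristic computation with vanishing statements for $H^0$ and $H^2$. Since $X$ is affine of cohomological dimension $1$, we have $H^2(X,\underline{A})=0$, so $\dim H^1(X,\underline{A}) = \dim H^0(X,\underline{A}) - \chi(X,\underline{A})$. First I would identify $H^0$: a global section of $\underline{A}=\pi_*^H(A_Y)$ over the connected scheme $X$ is an $H$-invariant global section of the constant sheaf $A_Y$ on the connected scheme $Y$, hence $H^0(X,\underline{A})=A^H$. So it remains to show $\chi(X,\underline{A}) = (2-2g-r)\dim_{\mathbb{F}_l} A$, i.e. $\chi(X,\underline{A}) = \chi(X,\mathbb{F}_l)\cdot \dim_{\mathbb{F}_l} A$ where $\chi(X,\mathbb{F}_l)=2-2g-r$ is the usual Euler characteristic of the affine curve.

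The key step is the tame Grothendieck–Ogg–Shafarevich formula, which is the promised ingredient from the introduction. The point is that $\underline{A}$ is a lisse (locally constant) $\mathbb{F}_l$-sheaf on $X$ which becomes constant after the finite étale cover $\pi\colon Y\to X$; since $H$ is a $p'$-group, this cover is at worst tamely ramified at the punctures $a_1,\dots,a_r$ of $\bar X$, and in fact the local monodromy of $\underline{A}$ at each $a_i$ has order prime to $p$, so all the Swan conductors of $\underline{A}$ vanish. The tame GOS formula then reads
$$\chi(X,\underline{A}) = \operatorname{rk}(\underline{A})\cdot\chi(\bar X) - \sum_{i=1}^r \dim_{\mathbb{F}_l}\bigl(A/A^{I_i}\bigr) - \sum_{i=1}^r \operatorname{Sw}_{a_i}(\underline{A}),$$
where $I_i$ is the inertia at $a_i$ acting on the stalk $A$, and $\chi(\bar X)=2-2g$. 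Here I would need to be slightly careful about conventions: writing it in the form $\chi(X,\underline{A}) = (2-2g)\dim A - \sum_i \dim(A/A^{I_i})$, and noting that the same formula applied to the constant sheaf $\mathbb{F}_l$ (for which every $A^{I_i}$ term is all of $\mathbb{F}_l$, so each $\dim(A/A^{I_i})$ term equals $\dim A$... ) — more precisely, comparing with the rank-one constant case gives $\chi(X,\mathbb{F}_l)=2-2g-r$, hence I want to show $\sum_i \dim(A/A^{I_i}) = r\dim_{\mathbb{F}_l}A$, which amounts to $A^{I_i}=0$ for every $i$.

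So the main obstacle — and the one genuinely substantive point — is establishing that the local monodromy representation $I_i \to \operatorname{GL}(A)$ has no nonzero invariants for each puncture $a_i$, equivalently that $A/A^{I_i}=A$. This will \emph{not} in general be true for an arbitrary $H$-module, so either the statement implicitly restricts to the case where $A$ is a \emph{nontrivial} irreducible $\mathbb{F}_l[H]$-module (and one uses irreducibility: $A^{I_i}$ is a subrepresentation... no, $I_i$ need not be normal in $H$) — hence I expect the argument actually to run the other way: one computes $\chi(X,\underline A)$ in terms of the fixed spaces $A^{I_i}$ without assuming they vanish, getting $\dim H^1(X,\underline A) = (2g+r-2)\dim A + \dim A^H + \sum_i \bigl(\dim A^{I_i}\bigr) - (\text{correction})$, and then the clean formula in the statement must be the special case where all inertia act freely enough that $\sum_i \dim A^{I_i}$ telescopes away, OR the formula as stated is simply the GOS output rewritten, with the understanding that for the trivial module one recovers $(2g+r-2)+1 = 2g+r-1 = \dim H^1(X,\mathbb{F}_l)$, which matches $\dim F_{2g+r-1}^{ab}\otimes\mathbb{F}_l$ and is the consistency check one wants. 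I would therefore structure the proof as: (1) reduce to Euler characteristic via $\operatorname{cd} X =1$ and $H^0=A^H$; (2) invoke tame GOS; (3) carry out the bookkeeping of the ramification terms, treating $\dim A^H$ as the genuinely cohomological contribution and absorbing the puncture contributions into the $(2g+r-2)\dim A$ term, the delicate part being the precise identification of the local terms $\sum_i \dim(A/A^{I_i})$ with $r\dim A$ under the running hypotheses on $A$.
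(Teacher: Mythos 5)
Your overall strategy is exactly the paper's: since $X$ est affine, la dimension cohomologique $\leq 1$ tue $H^2(X,\underline{A})$, on a $H^0(X,\underline{A})=A^H$ parce que $Y$ est connexe, et le lemme se ram\`ene \`a montrer $\chi(X,\underline{A})=(2-2g-r)\dim_{{\mathbb F}_l}A$, ce qui est pr\'ecis\'ement l\`a o\`u intervient la formule de Grothendieck--Ogg--Shafarevich mod\'er\'ee. Mais la forme de GOS que vous invoquez est incorrecte, et c'est ce qui fait d\'erailler la derni\`ere \'etape. La formule
$$\chi(X,\underline{A})=(2-2g)\dim_{{\mathbb F}_l}A-\sum_{i}\dim_{{\mathbb F}_l}(A/A^{I_i})-\sum_{i}\operatorname{Sw}_{a_i}(\underline{A})$$
n'est pas la caract\'eristique d'Euler de la courbe ouverte : d\'ej\`a pour le faisceau constant ${\mathbb F}_l$ elle donne $2-2g$ au lieu de $2-2g-r$ (vous remarquez vous-m\^eme cette incoh\'erence, puis tentez de la r\'eparer en exigeant $A^{I_i}=0$, ce qui est faux en g\'en\'eral --- par exemple pour le module trivial, ou d\`es que $\pi$ est non ramifi\'e au-dessus d'un des $a_i$). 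Ce que vous avez \'ecrit est essentiellement $\chi(\overline{X},j_*\underline{A})$, autrement dit une confusion entre les deux conducteurs $\epsilon_x$ et $\alpha_x$.

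La version r\'eellement n\'ecessaire (le corollaire \ref{goscor}, d\'eduit de GOS sur $\overline{X}$ via la suite de cohomologie relative de la paire $(\overline{X},X)$) est
$$\chi(X,F_{|X})=\chi(X)\dim_{{\mathbb F}_l}F_{\overline{\nu}}-\sum_{x}\alpha_x(F),\qquad \chi(X)=2-2g-r,$$
o\`u $\alpha_x$ est la seule partie de Swan : la chute de rang compl\`ete $\dim F_{\overline{\nu}}$ en chaque point retir\'e est d\'ej\`a absorb\'ee par $\chi(X)=\chi(\overline{X})-r$, et aucun terme d'invariants $A^{I_i}$ n'appara\^it. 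Comme $H$ est un $p'$-groupe, tous les $\alpha_x$ s'annulent et on obtient $\chi(X,\underline{A})=(2-2g-r)\dim_{{\mathbb F}_l}A$ pour \emph{tout} $A$, sans hypoth\`ese suppl\'ementaire ni comptabilit\'e des invariants d'inertie. Le \og point substantiel \fg{} que vous isolez ($A^{I_i}=0$) n'en est donc pas un ; la lacune de votre r\'edaction est la normalisation erron\'ee de la formule de GOS, qui laisse l'\'etape finale inachev\'ee (et inachevable telle quelle).
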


\begin{proof}
	C'est une cons\'equence imm\'ediate du th\'eor\`eme de Grothendieck-Ogg-Shafarevich : voir le corollaire \ref{goscor} de l'appendice \ref{gosapp}. En effet en notant $\overline{\pi} : \overline{Y}\rightarrow \overline{X}$ l'unique revêtement de courbes projectives et lisses prolongeant $\pi$, qui est encore galoisien de groupe $H$, on peut appliquer la formule au faisceau $F=\overline{\pi}_*^H{A_{\overline{Y}}}$, qui est constructible car $\overline{\pi}$ est propre, et dont la restriction \`a $X$ est $F_{|X}=\pi_*^H A_{Y}=\underline{A}$. L'hypoth\`ese sur $H$ fait que $F$ est mod\'er\'ement ramifi\'e, en d'autres termes pour tout $x\in \overline{X}(k)$ on a $\alpha_{x}(F)=0$, d'où la formule donn\'ee.

\end{proof}

\subsubsection{Lemme alg\'ebrique}
On fixe un entier naturel $N$.

\begin{lem}
  \label{lem5} On suppose que $A$ est un groupe ab\'elien
$l$-\'el\'ementaire irr\'eductible
\footnote{En particulier $A$ est suppos\'e non trivial.} pour l'action de $H$ et que $n_H\leq N$.  On note $G$ le produit semi-direct 
$G=A\rtimes H$.

Alors :
$$\dim_{{\mathbb F}_l}H^1(H,A)<(N-1)\dim_{{\mathbb F}_l}A+\dim_{{\mathbb F}_l}A^H \iff n_G\leq N\; . $$

\end{lem}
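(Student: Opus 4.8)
The plan is to reduce the claimed equivalence to a clean statement about the minimal number of generators of the semidirect product $G = A \rtimes H$, and then relate that to the cohomological quantity $\dim_{\mathbb{F}_l} H^1(H,A)$ via the standard description of $\operatorname{Hom}$'s and complements. First I would recall the group-theoretic dictionary for generating a split extension: a subset of $G$ generates $G$ if and only if its image generates $H$ and, after adjusting by elements of $A$, the lifts together with $A$-conjugates generate $A$ as well; more precisely, for a minimal generating system $x_1,\dots,x_m$ of $H$ with $m = n_H$, the number of extra generators needed from $A$ (or the possibility of absorbing $A$ into clever lifts of the $x_i$) is controlled by the cokernel of a certain map built from the $1$-cocycles. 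The crucial classical fact here is the formula of Gasch\"utz: if $N \leq n_H$ then $G = A \rtimes H$ can be generated by $N$ elements if and only if $A$, as an $\mathbb{F}_l[H]$-module, can be generated by $N$ elements "relative to the projection", which for an irreducible $A$ translates into an inequality involving $\dim A$, $\dim A^H$, and $\dim H^1(H,A)$.

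Concretely, the second step would be to make this inequality explicit. Lifting a minimal generating set $x_1,\dots,x_{n_H}$ of $H$ to $G$, a general lift is $(a_i, x_i)$ with $a_i \in A$ arbitrary; adjoining $s$ further elements $(b_1,1),\dots,(b_s,1)$ of $A$, the subgroup generated is all of $G$ iff the $H$-submodule of $A$ generated by the "defects" of the $(a_i,x_i)$ together with $b_1,\dots,b_s$ is all of $A$. Since $A$ is irreducible, this submodule is either $0$ or $A$, so one must count: varying the $a_i$ over all of $A^{n_H}$ produces, via the coboundary-type map $A^{n_H} \to A$ (or rather its analogue landing in an appropriate quotient), a subspace whose dimension is exactly $n_H \dim A - \dim H^1(H,A)$ — this is where the cohomology enters, essentially because $H^1(H,A)$ measures the failure of every "twisted" lift to already generate, and the kernel/cokernel bookkeeping of the relevant map is governed by $\dim A^H$ and $\dim H^1(H,A)$ through the long exact / bar-resolution computation. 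One then gets: $n_{A\rtimes H} \leq N$ iff $N \geq n_H$ and the image subspace has codimension (inside $A$, counted with the $s = N - n_H$ free slots) small enough, which rearranges precisely into $\dim H^1(H,A) < (N-1)\dim A + \dim A^H$.

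The third step is just to carry out the arithmetic of that rearrangement carefully, keeping track of the two regimes $N = n_H$ and $N > n_H$: when $N = n_H$ one has no free $A$-slots and the condition is that the defect map $A^{n_H}\to A$ is already surjective, i.e. $n_H \dim A - \dim H^1(H,A) \geq \dim A$ modulo the $\dim A^H$ correction; when $N > n_H$ each extra generator contributes a full copy of $\dim A$, and the inequality becomes strictly easier, consistently with the strict inequality in the statement. One should check separately, using $n_H \leq N$, that $n_G \geq n_H$ always (immediate, since $G \twoheadrightarrow H$) so that the case $N < n_H$ is vacuous on both sides, and that the irreducibility (hence non-triviality) of $A$ forces $\dim A \geq 1$, which is what makes the inequality well-behaved and the dichotomy "$0$ or $A$" available.

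I expect the main obstacle to be the precise cohomological bookkeeping in the middle step: identifying the dimension of the subspace of $A$ swept out by the defects of all possible lifts of a fixed minimal generating system of $H$ as exactly $n_H\dim A - \dim H^1(H,A)$, and checking that the $\dim A^H$ term appears with the right sign. This amounts to a careful application of Gasch\"utz's theorem (or a direct $1$-cocycle computation with the bar resolution), and the subtlety is that one is not computing $H^1$ of the free-ish object directly but rather the image of a map of affine spaces, so one must argue that the relevant map $A^{n_H} \to H^1(H,A)$ (sending a tuple of lifts to the cohomology class of the resulting crossed homomorphism) is surjective — which holds because $n_H$ generators of $H$ suffice to detect all of $H^1(H,A)$ — and then dimension-count. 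Everything else is formal manipulation.
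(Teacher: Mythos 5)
Your overall strategy --- fix a generating $N$-tuple of $H$, count which lifts to $G=A\rtimes H$ generate $G$, use the irreducibility of $A$ to get a clean dichotomy, and convert the count into $\dim H^1(H,A)$ and $\dim A^H$ --- is the right one and is essentially the paper's. But the middle step, which you yourself flag as the main obstacle, is both left undone and mis-stated, and as written it would not go through. First, the dichotomy is not ``the lifts $a_1\tilde{x_1},\dots,a_N\tilde{x_N}$ generate $G$ iff the $H$-submodule of $A$ generated by the defects $a_i$ is all of $A$'' : if $(a_i)$ is the restriction to the $x_i$ of a $1$-cocycle (for instance the coboundary of some $c\in A$), the lifts generate a complement of $A$ even though the $a_i$ may well generate $A$ as an $H$-module. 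The correct statement, and the only place where irreducibility enters, is : the subgroup $\tilde{G}$ generated by the lifts surjects onto $H$, so $A\cap\tilde{G}$ is an $H$-submodule, hence equals $1$ or $A$ ; it equals $A$ exactly when $\tilde{G}=G$, and equals $1$ exactly when the lifts are the values on the $x_i$ of a section $H\to G$. Second, the quantity you propose, $n_H\dim A-\dim_{{\mathbb F}_l}H^1(H,A)$, is not the dimension of anything relevant (it cannot be the dimension of a subspace of $A$), and the map $A^{n_H}\to H^1(H,A)$ you invoke is undefined precisely on the tuples whose lifts generate $G$, since no crossed homomorphism is attached to those. The correct count is that the non-generating tuples in $A^N$ are in bijection with the sections of $q$, i.e.\ with $Z^1(H,A)$, a subspace of dimension $\dim_{{\mathbb F}_l}A-\dim_{{\mathbb F}_l}A^H+\dim_{{\mathbb F}_l}H^1(H,A)$, independent of $N$ ; the lemma is then exactly the assertion that this subspace is proper in $A^N$.

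The paper runs this multiplicatively : the set $\mathcal{S}$ of sections satisfies $|A^H|\cdot|\mathcal{S}|=|H^1(H,A)|\cdot|A|$ (orbit--stabilizer for the conjugation action of $A$ on $\mathcal{S}$, every stabilizer being $A^H$), and the claimed inequality becomes $|\mathcal{S}|<|A|^N$. For the direction $n_G\leq N\Rightarrow{}$ inequality, which your sketch does not address, one observes that a generating $N$-tuple of $G$ is itself a lift of a generating tuple of $H$ that does not come from a section, whence the strict inequality. No appeal to Gasch\"utz is needed : once the identification ``non-generating lifts $=$ sections'' is in place, everything is elementary counting. In short : right skeleton, but the quantitative heart of the argument is missing and the formulas you guess for it are incorrect.
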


\begin{proof}
On note ${\cal S}=\{s:H\rightarrow G\}$ l'ensemble des sections (qui sont
  des morphismes de \emph{groupes}) de la projection canonique
  $q:G\twoheadrightarrow H$. Le groupe $A$ agit sur $\cal S$ par conjugaison, et
  il est bien connu que l'ensemble quotient est en bijection avec $H^1(H,A)$.
Comme le stabilisateur de \emph{toute} section est $A^H$, on en d\'eduit
  que

  $$|A^H| \cdot|{\cal S}| =|H^1(H,A)|\cdot|A|$$

  Il s'agit donc de montrer :

  $$n_G\leq N \iff  |{\cal S}|<|A|^N\; .$$

\noindent On suppose d'abord 
$n_G\leq N$, disons $<u_1,\cdots,u_N>=G$. Alors 
$<q(u_1),\cdots,q(u_N)>=H$
, et donc toute section est d\'etermin\'ee uniquement par
ses valeurs sur 
$q(u_1),\cdots,q(u_N)$, qui sont du type $a_1u_1,\cdots,a_Nu_N$, avec $(a_1,\cdots, a_N)\in A^N$.  Il y a donc au plus 
$|A|^N$ 
sections, mais le choix
$(1,\cdots,1)$
ne convenant pas, on a donc  
$|{\cal S}|<|A|^N$.

R\'eciproquement on suppose 
$n_G>N$. Soit $\{x_1, \cdots , x_N\} $ un 
syst\`eme g\'en\'erateur de $H$ et $\tilde{x_1},\cdots,\tilde{x_N}$ des rel\`evements arbitraires dans $G$. Alors pour tout $(a_1,\cdots, a_N)\in A^N$
on va montrer qu'il existe une (unique) section $s$ telle que $s(x_1)=a_1 \tilde{x_1}, \cdots, s(x_N)=a_N \tilde{x_N}$. Comme deux telles sections sont distinctes, ceci impliquera $|{\cal S}|\geq|A|^N$.

Pour prouver l'existence de la section, on pose $\tilde{G}= <a_1 \tilde{x_1},\cdots,  a_N \tilde{x_N}>\subset G$. 
Alors $\tilde{G}$ se surjecte sur $H$ et
$B=A\cap \tilde{G}$ est un sous $H$-module de $A$. Comme $A$ est irr\'eductible
et 
$n_G>N$
on doit avoir $B=1$, ce qui montre que 
$q$ induit un
isomorphisme de $\tilde{G}$ sur $H$, et l'isomorphisme r\'eciproque fournit
la section souhait\'ee\footnote{Remarquons qu'il ressort de la preuve que sous l'hypoth\`ese $n_G>N\geq n_H$, on a $n_G=n_H+1$ et $N=n_H$. Cela peut \^etre v\'erifi\'e directement d'ailleurs par le m\^eme genre d'arguments : il suffit en effet de voir que $n_G\leq n_H+1$. Soit $x_1,\cdots,x_{n_H}$ un syst\`eme minimal de g\'en\'erateurs de $H$, qu'on rel\`eve arbitrairement dans $G$ en $\tilde{x_1},\cdots,\tilde{x_{n_H}}$. Soit de plus $a\neq 0$ dans $A$. Alors $\tilde{x_1},\cdots,\tilde{x_{n_H}},a$ engendrent $G$~: en effet si $\tilde{G}$ est le sous-groupe de $G$ engendr\'e par ces \'el\'ements, alors  $\tilde{G}$ s'envoie sur $H$ de mani\`ere surjective, le noyau $A\cap \tilde G$ est muni d'une action de $H$ compatible avec son inclusion dans $A$, et contient $a$, donc par irr\'eductibilit\'e doit \^etre \'egal \`a $A$, ce qui impose $\tilde{G}=G$.}.
\end{proof}

\subsubsection{Preuve de la proposition \ref{state} lorsque $G$ est produit semi-direct : conclusion}
On utilise les lemmes  \ref{lem3}, \ref{lem4} et \ref{lem5} (le dernier avec $N=2g+r-1$). Plus pr\'ecis\'ement, pour montrer l'implication $n_G\leq 2g+r-1 \implies \exists \pi_1(X,\underline{x})\twoheadrightarrow G$, on part d'un \'epimorphisme $\pi_1(X,\underline{x})\twoheadrightarrow H$, dont on a suppos\'e l'existence, et on le rel\`eve \`a $G$ gr\^ace à l'hypoth\`ese $n_G\leq 2g+r-1$ et aux lemmes. Pour montrer l'implication réciproque, on part alors d'un \'epimorphisme $\pi_1(X,\underline{x})\twoheadrightarrow G$, puis on applique le lemme \ref{lem3} à l'\'epimorphisme compos\'e $\pi_1(X,\underline{x})\twoheadrightarrow G\twoheadrightarrow H$, et les lemmes \ref{lem4} et \ref{lem5} montrent qu'alors $n_G\leq 2g+r-1$. 

\subsection{Preuve du th\'eor\`eme \ref{structure}}

	La proposition \ref{state} montre que les deux groupes profinis $\pi_1^{res}(X,\overline{x})$ et  $F_{2g+r-1} ^{res} $ ont mêmes quotients finis. Comme le second est topologiquement de type fini, cela suffit d'apr\`es \cite{FJ}, Proposition 15.4 ou  \cite{FJ2}, Proposition 16.10.7. .

\section{Remarque sur le cas des groupes d'ordre divisible par $p$}

Il semble naturel de demander ce qu'il subsiste de la m\'ethode pr\'ec\'edente pour des groupes d'ordre divisible par la caract\'eristique $p$ du corps $k$. En particulier, on aimerait avoir une preuve alg\'ebrique de la conjecture d'Abhyankar\footnote{On rappelle que celle-ci affirme qu'un groupe fini est groupe de Galois d'une extension \'etale d'une courbe affine $X$ si et seulement si son quotient par le sous-groupe engendr\'e par ses $p$-sous-groupes de Sylow admet au plus $2g+r-1$ g\'en\'erateurs. Cette conjecture a \'et\'e d\'emontr\'ee par Raynaud pour la droite affine \cite{R2} et Harbater dans le cas g\'en\'eral \cite{Har}.}, pour les groupes r\'esolubles pour toute courbe affine, g\'en\'eralisant d'une part celle donn\'ee dans \cite{JPS} pour la droite affine, et le th\'eor\`eme \ref{structure} d'autre part.

Dans cette direction, on remarque que l'hypoth\`ese que $H$ est un $p'$-groupe n'intervient que dans le lemme \ref{lem4}. Si on la remplace par l'hypoth\`ese plus faible que l'action de $H$ sur le faisceau $F$ introduit dans la preuve du lemme \ref{lem4} est mod\'er\'ee, on a obtenu au passage le r\'esultat suivant.

\begin{prop}

Soit $\pi_1(X,\overline{x})\twoheadrightarrow H$ où $H$ est un groupe
fini d'ordre quelconque, et $\pi :Y \rightarrow X$ le revêtement galoisien connexe correspondant.

On consid\`ere le probl\`eme de plongement suivant :

$$\xymatrix{ &&& \pi_1(X,\underline{x}) \ar@{->>}[d]\ar@{-->}[dl] \\1 \ar[r] & A \ar[r] & G \ar[r]^{q} &  H \ar[r]  & 1  }  $$  

Si $A$ est ab\'elien $l$-\'el\'ementaire avec $l\neq p$, et irr\'eductible pour l'action de $H$, si de plus le faisceau \'etale $\underline{A}$ associ\'e sur $X$ est mod\'er\'e, si enfin $n_G\leq 2g+r-1$, alors le probl\`eme consid\'er\'e a une solution forte.  

\end{prop}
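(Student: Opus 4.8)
The plan is to reread the proof of Proposition~\ref{state}, isolating the single place where the hypothesis ``$H$ is a $p'$-group'' intervened. That hypothesis was used only to apply Lemma~\ref{lem4}, i.e.\ to run the Grothendieck--Ogg--Shafarevich computation of $\dim_{\mathbb{F}_l}H^1(X,\underline{A})$; and, as already noted, that computation only requires the constructible sheaf $F=\overline{\pi}_*^H A_{\overline{Y}}$ on $\overline{X}$ to be tamely ramified, which is precisely the hypothesis that $\underline{A}=F_{|X}$ is tame (then $\alpha_x(F)=0$ at every closed point $x$ of $\overline{X}$, so Corollary~\ref{goscor} applies verbatim). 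Hence under the present hypotheses one still has
$$\dim_{\mathbb{F}_l}H^1(X,\underline{A})=(2g+r-2)\dim_{\mathbb{F}_l}A+\dim_{\mathbb{F}_l}A^H.$$

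First I would dispose of the case where the class of $1\to A\to G\xrightarrow{q}H\to 1$ in $H^2(H,A)$ is non-trivial, exactly as in the first part of the proof of Proposition~\ref{state}; here neither tameness nor the bound on $n_G$ is used. By Lemma~\ref{lem1} the transgression $\Hom_H(\pi_1^{ab}(Y),A)\to H^2(H,A)$ is surjective, so one picks $u\in\Hom_H(\pi_1^{ab}(Y),A)$ whose transgression is the opposite of the class of the extension; then $u\neq 0$, hence $u$ is surjective because $A$ is irreducible, and by Lemma~\ref{lem2} it corresponds to a connected Galois covering $Z\twoheadrightarrow X$ whose Galois group is an extension of $H$ by $A$ of the same class as the given one, hence isomorphic to $G$ over $H$. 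The resulting epimorphism $\pi_1(X,\overline{x})\twoheadrightarrow G$ is a strong solution.

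There remains the case $G=A\rtimes H$, which is where $n_G\leq 2g+r-1$ and the displayed formula are needed. Put $N=2g+r-1$. Since $H$ is a quotient of $G$ one has $n_H\leq n_G\leq N$, so Lemma~\ref{lem5} applies, and the assumption $n_G\leq N$ gives $\dim_{\mathbb{F}_l}H^1(H,A)<(N-1)\dim_{\mathbb{F}_l}A+\dim_{\mathbb{F}_l}A^H$; comparing with the displayed formula (valid by tameness) this reads $\dim_{\mathbb{F}_l}H^1(H,A)<\dim_{\mathbb{F}_l}H^1(X,\underline{A})$, and Lemma~\ref{lem3} then produces the strong solution. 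The only genuinely new point over Proposition~\ref{state} -- and the one to treat with care -- is the one handled in the appendix: that tameness of $\underline{A}$ (equivalently of $F$) already forces all the local terms $\alpha_x(F)$ to vanish, so that Lemma~\ref{lem4} holds without assuming $H$ of order prime to $p$. Everything else is a transcription of the earlier argument.
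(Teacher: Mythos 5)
Your proof is correct and follows exactly the paper's route: the paper establishes this proposition merely by remarking that the hypothesis ``$H$ est un $p'$-groupe'' enters only through the lemme \ref{lem4} (le calcul de Grothendieck--Ogg--Shafarevich), où la modération de $F$ suffit à annuler les termes $\alpha_x(F)$, le reste de l'argument (cas non scindé via la transgression, cas scindé via les lemmes \ref{lem3} et \ref{lem5}) étant inchangé. Votre rédaction explicite ces étapes de manière fidèle.
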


\begin{rem}
	L'hypoth\`ese que $\underline{A}$ est mod\'er\'e est bien sûr v\'erifi\'ee si le prolongement canonique de $\pi$ en un revêtement de courbes projectives lisses est lui-m\^eme mod\'er\'e. Dans cette situation, les m\'ethodes transcendantes et alg\'ebriques ne sont pas vraiment concurrentes, mais compl\'ementaires :  le th\'eor\`eme d'existence de Riemann, alli\'e \`a la th\'eorie de la sp\'ecialisation du groupe fondamental,  affirme (\cite{SGA1} XIII Corollaire 2.12) que la condition $n_G\leq 2g+r-1$ est n\'ecessaire pour que $G$ soit un quotient du groupe fondamental mod\'er\'e, et la technique de rel\`evement adopt\'ee ici montre, pour ce type tr\`es particulier de groupe, qu'elle est aussi suffisante.  
\end{rem}

\appendix

\section{Appendices}

\subsection{Hochschild-Serre}
\label{HS}
\begin{thm}[Suite spectrale de Hochschild-Serre]
  Soit $\pi : Y \rightarrow X$ un revêtement galoisien de groupe $H$,
  et $F$ un faisceau (ab\'elien) \'etale sur $X$. Alors on a une suite
  spectrale :
$$H^p(H,H^q(Y,\pi^* F))\Longrightarrow H^{p+q}(X,F)$$
\end{thm}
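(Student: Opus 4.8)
Le plan est de r\'ealiser le foncteur des sections globales sur $X$ comme la compos\'ee de deux foncteurs, puis d'invoquer la suite spectrale de Grothendieck d'un foncteur compos\'e. La descente galoisienne le long du $H$-torseur \'etale $\pi:Y\rightarrow X$ fournit une \'equivalence entre la cat\'egorie des faisceaux \'etales ab\'eliens sur $X$ et celle des faisceaux \'etales ab\'eliens $H$-\'equivariants sur $Y$, r\'ealis\'ee par $\pi^*$~; sous cette \'equivalence, $\Gamma(X,F)$ s'identifie \`a $\Gamma(Y,\pi^*F)^H$. Sur la cat\'egorie \'equivariante, je factoriserais le foncteur $\mathcal G\mapsto \Gamma(Y,\mathcal G)^H$ en le foncteur des sections globales $\Gamma(Y,-)$, \`a valeurs dans les $H$-modules, suivi du foncteur des invariants $(-)^H$.

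Trois identifications seraient alors \`a \'etablir. D'une part $R^p(-)^H=H^p(H,-)$ par d\'efinition de la cohomologie des groupes, et $R^{p+q}$ du foncteur compos\'e redonne $H^{p+q}(X,F)$ via l'\'equivalence ci-dessus (les d\'eriv\'es se transportent puisque $\pi^*$ est une \'equivalence, donc exacte et pr\'eservant les injectifs). D'autre part, $R^q\Gamma(Y,-)$ appliqu\'e \`a $\pi^*F$ redonne $H^q(Y,\pi^*F)$ muni de son action naturelle de $H$~: cela tient \`a ce que tout objet injectif de la cat\'egorie \'equivariante a un faisceau sous-jacent acyclique pour $\Gamma(Y,-)$, de sorte qu'une r\'esolution injective \'equivariante de $\pi^*F$ calcule la cohomologie \'etale ordinaire de $Y$.

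Le c\oe{}ur technique, et l'obstacle principal, est la v\'erification de l'hypoth\`ese de la suite spectrale de Grothendieck~: $\Gamma(Y,-)$ doit envoyer les injectifs de la cat\'egorie \'equivariante sur des $H$-modules cohomologiquement triviaux, donc acycliques pour $(-)^H$. Pour cela j'utiliserais le foncteur de coinduction $\mathrm{Coind}$, adjoint \`a droite du foncteur d'oubli $U$ de $\mathrm{Sh}(Y)$ vers la cat\'egorie \'equivariante. Les sections globales d'un faisceau coinduit \`a partir d'un faisceau $\mathcal K$ sur $Y$ valent $\prod_{h\in H}\Gamma(Y,h^*\mathcal K)\simeq \mathrm{Coind}_1^H\Gamma(Y,\mathcal K)$, $H$ permutant les facteurs~: c'est un $H$-module coinduit, donc cohomologiquement trivial. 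Or le morphisme d'unit\'e $\mathcal J\rightarrow \mathrm{Coind}(U\mathcal J)$ est injectif et se scinde dans la cat\'egorie \'equivariante lorsque $\mathcal J$ est injectif~; ainsi $\Gamma(Y,\mathcal J)$ est facteur direct d'un module coinduit, donc cohomologiquement trivial. C'est pr\'ecis\'ement ici qu'intervient de mani\`ere essentielle la libert\'e de l'action de $H$, c'est-\`a-dire le fait que $Y\rightarrow X$ soit un torseur~; l'argument na\"if consistant \`a esp\'erer que $\Gamma(Y,\pi^*F)$ soit lui-m\^eme induit \'echoue (il est faux d\'ej\`a pour $F$ constant et $Y$ connexe), et c'est l'injectivit\'e qui sauve la situation.

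La suite spectrale de Grothendieck du foncteur compos\'e livre alors directement $H^p(H,H^q(Y,\pi^*F))\Longrightarrow H^{p+q}(X,F)$, comme annonc\'e.
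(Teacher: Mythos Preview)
Your outline is correct: factoring $\Gamma(X,-)\simeq (-)^H\circ\Gamma(Y,-)$ through the category of $H$-equivariant \'etale sheaves on $Y$, and verifying the Grothendieck hypothesis by embedding an equivariant injective as a summand of a coinduced object, is exactly the standard derivation of this spectral sequence. The paper itself does not give an argument but simply refers to \cite{Milne}, III~Theorem~2.20, where essentially the same composite-functor proof is carried out; your write-up is therefore more detailed than, but in substance identical to, what the paper invokes.
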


\begin{proof}
	Voir par exemple \cite{Milne} III Theorem 2.20.
	
\end{proof}

Cette suite spectrale \'etant cohomologique, on dispose en particulier de la suite exacte courte des termes de bas degr\'e, qui s'\'ecrit ici :

\begin{displaymath}
	0\rightarrow H^1(H,H^0(Y,\pi^* F))\rightarrow H^{1}(X,F)\rightarrow H^1(Y,\pi^* F)^H\rightarrow  H^2(H,H^0(Y,\pi^* F))\rightarrow  H^{2}(X,F)
\end{displaymath}

\subsection{Grothendieck-Ogg-Shafarevich}
\label{gosapp}
On suit \cite{R}, \cite{Milne} : $k$ est un
corps alg\'ebriquement clos de caract\'eristique
$p$ (\'eventuellement nulle), $\overline{X}$ une courbe alg\'ebrique irr\'eductible, projective et lisse sur $k$, de point g\'en\'erique $\overline{\nu}$.

\begin{defi}
Soit $l$ un nombre premier distinct de $p$, et $F$ un faisceau
constructible de ${\mathbb F_l}$-modules sur $\overline{X}$. \emph{L'invariant de Swan de $F$ en $x\in \overline{X}(k)$} est l'entier :

$$\alpha_x(F)=\sum_{i=1}^{\infty} \frac{g_i}{g_0}
\dim_{{\mathbb F}_l} F_{\overline{\nu}}/F_{\overline{\nu}}^{G_i}$$

où $g_i$ est l'ordre de $G_i$, $i$-\`eme groupe de ramification en $x$
dans un revêtement galoisien $\overline{Y}/\overline{X}$ trivialisant $F$ au point g\'en\'erique.
\end{defi}

\begin{defi}\emph{L'exposant du conducteur de $F$ en $x$} est :
$$\epsilon_x(F)=\alpha_x(F)+\dim_{{\mathbb
      F}_l}F_{\overline{\nu}}-\dim_{{\mathbb F}_l}F_x$$ 
\end{defi}

On a alors :

\begin{thm}[Grothendieck-Ogg-Shafarevich]
$$\chi(\overline{X},F)=\chi(\overline{X})\dim_{{\mathbb F}_l}F_{\overline{\nu}}-\sum_{x\in \overline{X}(k)}
\epsilon_x(F)\; .$$
\end{thm}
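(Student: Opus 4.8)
Ce th\'eor\`eme classique se d\'emontre par d\'evissage; indiquons la marche \`a suivre (pour les d\'etails on renvoie \`a \cite{R}, \cite{Milne}). Tout repose sur le fait que, comme fonctions du faisceau constructible $F$, les deux membres de l'\'egalit\'e sont \emph{additifs} sur les suites exactes courtes de faisceaux constructibles de ${\mathbb F}_l$-modules sur $\overline{X}$ : pour $\chi(\overline{X},F)$ c'est la suite exacte longue de cohomologie; et dans le second membre $\dim_{{\mathbb F}_l}F_{\overline{\nu}}$ et $\dim_{{\mathbb F}_l}F_x$ sont trivialement additifs, tandis que $\alpha_x$ l'est aussi, car les groupes de ramification sup\'erieurs $G_i$ ($i\geq 1$) sont des $p$-groupes avec $p\neq l$, de sorte que $H^q(G_i,-)=0$ pour tout $q>0$ sur les ${\mathbb F}_l$-modules et que $F\mapsto\dim_{{\mathbb F}_l}(F_{\overline{\nu}}/F_{\overline{\nu}}^{G_i})$ est exact. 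Il suffit donc de v\'erifier la formule sur des g\'en\'erateurs du groupe de Grothendieck des faisceaux constructibles. Gr\^ace \`a la suite exacte $0\to j_!j^*F\to F\to i_*i^*F\to 0$, o\`u $j:U\hookrightarrow\overline{X}$ est un ouvert dense sur lequel $F$ est lisse et $i$ l'immersion ferm\'ee compl\'ementaire, on se ram\`enerait \`a deux cas : les faisceaux gratte-ciel, pour lesquels les deux membres valent imm\'ediatement $\sum_x\dim_{{\mathbb F}_l}F_x$; et les faisceaux $j_!\mathcal{L}$ avec $\mathcal{L}$ lisse sur $U$.

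Dans ce dernier cas, $\mathcal{L}$ est trivialis\'e par un rev\^etement galoisien connexe $\pi:V\to U$ de groupe $G$, de prolongement $\overline{\pi}:\overline{V}\to\overline{X}$ (normalisation, \'etale au-dessus de $U$); $\mathcal{L}$ correspond alors \`a un ${\mathbb F}_l[G]$-module et, par additivit\'e, la diff\'erence $\delta(\mathcal{L})$ des deux membres de la formule appliqu\'ee \`a $j_!\mathcal{L}$ ne d\'epend que de la classe $[\mathcal{L}]$ dans le groupe de Grothendieck $R_{{\mathbb F}_l}(G)$. On utiliserait alors deux ingr\'edients. D'une part, pour un module induit, $\delta(\mathrm{Ind}_{G'}^G\mathcal{L}')$ co\"incide avec l'invariant analogue de $\mathcal{L}'$ relatif \`a la courbe $\overline{V/G'}$ et au rev\^etement fini $\overline{V/G'}\to\overline{X}$ : cela d\'ecoulerait de la suite spectrale de Leray pour ce rev\^etement fini (c\^ot\'e cohomologie) et de la formule conducteur-discriminant jointe \`a la formule de Riemann-Hurwitz (c\^ot\'e second membre). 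D'autre part, quitte \`a \'etendre le corps des coefficients \`a un ${\mathbb F}_{l^s}$ --- ce qui ne modifie ni les rangs, ni les $\alpha_x$, ni la caract\'eristique d'Euler de $j_!\mathcal{L}$ --- le th\'eor\`eme d'induction de Brauer \'ecrit $[\mathcal{L}]$ comme combinaison ${\mathbb Z}$-lin\'eaire de classes $[\mathrm{Ind}_{G_i}^G\lambda_i]$ avec $G_i$ sous-groupe $l$-\'el\'ementaire et $\lambda_i$ de dimension $1$; comme ${\mathbb F}_{l^s}^\times$ est d'ordre premier \`a $p$, un tel $\lambda_i$ est automatiquement mod\'er\'ement ramifi\'e. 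On serait ainsi ramen\'e \`a la formule pour le faisceau constant et pour les faisceaux de rang $1$, sur une courbe affine lisse arbitraire.

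Le cas du faisceau constant d\'ecoule de l'additivit\'e (et de $\chi(\overline{X},{\mathbb F}_l)=2-2g$, si ce n'est pas la d\'efinition retenue pour $\chi(\overline{X})$). Pour $\mathcal{L}$ de rang $1$, non trivial, lisse sur $U$, associ\'e \`a un caract\`ere $\chi_0$ de $\pi_1(U)$ d'ordre $n$ premier \`a $p$ (on peut supposer $\mu_n\subset{\mathbb F}_l$), le rev\^etement cyclique connexe $\pi:V\to U$ d\'ecoup\'e par $\chi_0$ est \'etale, donc le groupe ${\mathbb Z}/n$ agit \emph{librement} sur $V$; de plus $H^i(\overline{X},j_!\mathcal{L})=H^i_c(U,\mathcal{L})$ s'identifie \`a la composante $\chi_0$-isotypique de $H^i_c(V,{\mathbb F}_l)$. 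Par orthogonalit\'e des caract\`eres, puis par la formule des traces de Lefschetz-Grothendieck appliqu\'ee \`a un automorphisme d'ordre premier \`a $l$ agissant sans point fixe, la trace de chaque $\sigma\neq 1$ sur $\sum_i(-1)^i[H^i_c(V,{\mathbb F}_l)]$ est nulle, d'o\`u $\chi(\overline{X},j_!\mathcal{L})=\tfrac{1}{n}\,\chi_c(V)$, o\`u $\chi_c$ d\'esigne la caract\'eristique d'Euler \`a support compact; et $\chi_c(V)=n\,\chi_c(U)$ puisque $\pi$ est \'etale de degr\'e $n$ (Riemann-Hurwitz). On obtiendrait donc $\chi(\overline{X},j_!\mathcal{L})=\chi_c(U)$, qui est exactement la valeur du second membre pour un faisceau mod\'er\'e de rang $1$.

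Le point d\'elicat sera l'\'etape d'induction du deuxi\`eme paragraphe : \'etablir la formule conducteur-discriminant (compatibilit\'e du conducteur \`a l'image directe par un rev\^etement fini) et la marier avec la formule de Riemann-Hurwitz. C'est l\`a qu'interviendrait r\'eellement la g\'eom\'etrie des courbes, le reste se ramenant \`a de l'additivit\'e et \`a de la th\'eorie modulaire des repr\'esentations.
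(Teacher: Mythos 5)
Le texte lui-m\^eme ne d\'emontre pas ce th\'eor\`eme : il l'\'enonce en appendice et renvoie \`a \cite{R} et \cite{Milne}. Votre esquisse suit l'architecture classique (additivit\'e des deux membres en $F$, r\'eduction aux faisceaux gratte-ciel et aux $j_!\mathcal{L}$, induction de Brauer, formule des conducteurs pour l'induction, cas de rang $1$), et cette architecture est la bonne. Mais elle comporte une lacune r\'eelle, situ\'ee exactement l\`a o\`u l'\'enonc\'e cesse d'\^etre sa version mod\'er\'ee : l'affirmation selon laquelle les caract\`eres $\lambda_i$ de dimension $1$ fournis par le th\'eor\`eme de Brauer sont automatiquement mod\'er\'ement ramifi\'es, au motif que ${\mathbb F}_{l^s}^\times$ serait d'ordre premier \`a $p$, est fausse. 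Cet ordre vaut $l^s-1$, qui peut parfaitement \^etre divisible par $p$ (prendre $l=3$, $s=1$, $p=2$, ou $l=2$, $s=3$, $p=7$). Un caract\`ere d'ordre $p$ \`a valeurs dans ${\mathbb F}_{l^s}^\times$ d\'ecoupe un rev\^etement cyclique de degr\'e $p$, de type Artin--Schreier, sauvagement ramifi\'e ; c'est pr\'ecis\'ement le cas o\`u l'invariant de Swan $\alpha_x$ est non nul et o\`u la formule dit quelque chose de plus que sa version mod\'er\'ee. Notez que si $G={\mathbb Z}/p$ et $\mathcal{L}$ est d\'ej\`a de rang $1$, l'induction de Brauer ne r\'eduit rien : ce cas est in\'evitable, et votre troisi\`eme paragraphe ne le traite pas.

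Votre argument, une fois compl\'et\'es les d\'etails du deuxi\`eme paragraphe (formule d'induction des conducteurs et Riemann--Hurwitz), ne d\'emontre donc que la forme mod\'er\'ee de la formule --- celle dont la pr\'esente note se sert effectivement, comme le signale l'introduction --- mais pas l'\'enonc\'e g\'en\'eral avec le terme $\alpha_x(F)$. Le c\oe{}ur de la d\'emonstration de Grothendieck--Ogg--Shafarevich est justement le faisceau de rang $1$ sauvagement ramifi\'e, trait\'e chez Raynaud \cite{R} au moyen de la jacobienne g\'en\'eralis\'ee et de l'isog\'enie de Lang, et chez Milne (\cite{Milne} V, \S 2) par la th\'eorie locale du conducteur d'Artin ; aucune de ces deux voies n'appara\^it dans votre texte. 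Deux points secondaires : le th\'eor\`eme de Brauer, tel qu'on l'utilise ici, fait intervenir les sous-groupes \'el\'ementaires relatifs \`a tous les nombres premiers, et non les seuls sous-groupes $l$-\'el\'ementaires ; et les traces sur la cohomologie \`a coefficients dans ${\mathbb F}_l$ doivent s'entendre au sens des caract\`eres de Brauer, ce qui est licite ici puisque $\sigma$ est d'ordre divisant $l^s-1$, donc premier \`a $l$.
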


\begin{cor}
	\label{goscor}
  Soit $X$ un ouvert affine de $\overline{X}$, ne contenant aucun des points où $F$ est ramifi\'e. Alors :

  $$\chi(X,F_{|X})=\chi(X)\dim_{{\mathbb
  F}_l}F_{\overline{\nu}}-\sum_{x\in \overline{X}(k)} \alpha_x(F)\; .$$

  \end{cor}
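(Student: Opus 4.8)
Le plan est d'appliquer le th\'eor\`eme de Grothendieck-Ogg-Shafarevich au faisceau constructible $F$ sur la courbe propre $\overline{X}$, puis de transf\'erer l'\'egalit\'e obtenue \`a l'ouvert affine $X$. Posons $S=\overline{X}(k)\setminus X(k)$~: c'est un ensemble fini, non vide puisque $X$ est affine, et contenant par hypoth\`ese tous les points de ramification de $F$. Pour $x\in X(k)$, le faisceau $F$ est lisse en $x$, de sorte que $\alpha_x(F)=0$ et $F_x\simeq F_{\overline{\nu}}$, donc $\epsilon_x(F)=0$~; par cons\'equent $\sum_{x\in\overline{X}(k)}\epsilon_x(F)=\sum_{x\in S}\epsilon_x(F)$ et de m\^eme $\sum_{x\in\overline{X}(k)}\alpha_x(F)=\sum_{x\in S}\alpha_x(F)$.

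Je consid\'ererais ensuite l'immersion ouverte $j:X\hookrightarrow\overline{X}$, l'immersion ferm\'ee compl\'ementaire $i:S\hookrightarrow\overline{X}$ (munie de sa structure r\'eduite), et la suite exacte courte de faisceaux \'etales $0\to j_!(F_{|X})\to F\to i_*i^*F\to 0$ sur $\overline{X}$. Comme $S$ est fini, $H^q(\overline{X},i_*i^*F)$ vaut $\bigoplus_{x\in S}F_x$ pour $q=0$ et $0$ sinon, tandis que $H^{\bullet}(\overline{X},j_!(F_{|X}))=H^{\bullet}_c(X,F_{|X})$ par d\'efinition de la cohomologie \`a support propre~; on en tire $\chi(\overline{X},F)=\chi_c(X,F_{|X})+\sum_{x\in S}\dim_{{\mathbb F}_l}F_x$. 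En rempla\c{c}ant $\chi(\overline{X},F)$ par l'expression de Grothendieck-Ogg-Shafarevich, en d\'eveloppant $\epsilon_x(F)=\alpha_x(F)+\dim_{{\mathbb F}_l}F_{\overline{\nu}}-\dim_{{\mathbb F}_l}F_x$ et en utilisant $\chi(X)=\chi(\overline{X})-\#S$, les termes en $\dim_{{\mathbb F}_l}F_x$ se compensent et l'on obtient
$$\chi_c(X,F_{|X})=\chi(X)\dim_{{\mathbb F}_l}F_{\overline{\nu}}-\sum_{x\in\overline{X}(k)}\alpha_x(F)\;.$$

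Il reste alors \`a identifier $\chi(X,F_{|X})$ et $\chi_c(X,F_{|X})$, ce qui est \`a mon sens l'\'etape d\'elicate. Une v\'erification directe consiste \`a comparer la suite ci-dessus \`a celle provenant de $Rj_*$~: on a $\chi(X,F_{|X})=\chi(\overline{X},j_*(F_{|X}))-\chi(\overline{X},R^1j_*(F_{|X}))$, les faisceaux $R^qj_*(F_{|X})$ \'etant nuls pour $q\geq 2$ et concentr\'es sur $S$ pour $q\geq 1$~; de plus la suite exacte $0\to j_!(F_{|X})\to j_*(F_{|X})\to Q\to 0$, avec $Q$ support\'e sur $S$ de fibre $Q_x=F_{\overline{\nu}}^{I_x}$ (o\`u $I_x$ est l'inertie en $x$), donne $\chi(\overline{X},j_*(F_{|X}))=\chi_c(X,F_{|X})+\sum_{x\in S}\dim_{{\mathbb F}_l}F_{\overline{\nu}}^{I_x}$, et comme $(R^1j_*(F_{|X}))_x=H^1(I_x,F_{\overline{\nu}})$ il vient $\chi(X,F_{|X})-\chi_c(X,F_{|X})=\sum_{x\in S}\big(\dim_{{\mathbb F}_l}F_{\overline{\nu}}^{I_x}-\dim_{{\mathbb F}_l}H^1(I_x,F_{\overline{\nu}})\big)=\sum_{x\in S}\chi(I_x,F_{\overline{\nu}})$. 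Or chacun de ces termes est nul~: $F_{\overline{\nu}}$ \'etant un ${\mathbb F}_l$-module fini avec $l\neq p$, la cohomologie en degr\'es $>0$ de l'inertie sauvage (pro-$p$) s'annule, et l'inertie mod\'er\'ee, procyclique, n'agit qu'\`a travers un quotient fini, de sorte que $H^{\geq 2}=0$ et $\dim_{{\mathbb F}_l}H^0=\dim_{{\mathbb F}_l}H^1$ (noyau et conoyau d'un endomorphisme d'un ${\mathbb F}_l$-espace vectoriel de dimension finie). Ainsi $\chi(X,F_{|X})=\chi_c(X,F_{|X})$, ce qui, combin\'e au paragraphe pr\'ec\'edent, donne la formule voulue. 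Alternativement, mais de fa\c{c}on moins \'el\'ementaire, on peut invoquer la dualit\'e de Poincar\'e sur la courbe affine lisse $X$, le rang et les invariants de Swan d'un syst\`eme local et de son dual co\"incidant.
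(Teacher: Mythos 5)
Votre preuve est correcte, mais elle suit un chemin sensiblement diff\'erent de celui du texte. La preuve du papier tient en une ligne~: on \'ecrit la suite exacte de cohomologie relative de la paire $(\overline{X},X)$, qui donne directement $\chi(\overline{X},F)=\chi(X,F_{|X})+\sum_{x\in S}\chi_x(\overline{X},F)$ avec $\chi_x(\overline{X},F)=\sum_i(-1)^i\dim H^i_x(\overline{X},F)$, puis on invoque le lemme V 2.10 (a) de Milne, qui affirme que ce terme local vaut $\dim_{{\mathbb F}_l}F_x$~; le t\'elescopage avec la formule de Grothendieck--Ogg--Shafarevich est alors le m\^eme que le v\^otre. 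Vous passez au contraire par la suite $0\to j_!(F_{|X})\to F\to i_*i^*F\to 0$, ce qui vous donne d'abord $\chi_c(X,F_{|X})$, et vous devez ensuite \'etablir s\'epar\'ement l'\'egalit\'e $\chi(X,F_{|X})=\chi_c(X,F_{|X})$ par le calcul local aux points de $S$ (annulation de la cohomologie sup\'erieure de l'inertie sauvage pro-$p$ pour des coefficients $l$-primaires, puis $\dim\ker=\dim\mathrm{coker}$ pour l'action de l'inertie mod\'er\'ee procyclique). Ce calcul local est exact, et c'est en substance pr\'ecis\'ement le contenu du lemme de Milne que le papier cite sans le red\'emontrer~: votre r\'edaction est donc plus longue mais plus autonome, et elle a le m\'erite de faire appara\^{\i}tre explicitement o\`u intervient l'hypoth\`ese $l\neq p$ (sans elle, $\chi$ et $\chi_c$ diff\`erent en g\'en\'eral). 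Les deux approches d\'emontrent bien l'\'enonc\'e.
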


Le corollaire est imm\'ediat \`a partir de la suite de cohomologie
relative de la paire $(\overline{X},X)$ et du fait que $\dim_{{\mathbb
    F}_l}F_x=\sum_i(-1)^i \dim_{{\mathbb
    F}_l} H^i_x(\overline{X},F)$ (\cite{Milne} V Lemma 2.10 (a)).

\begin{rem}
Notons que si 
  $r$ est le nombre de points de $\overline{X}-X$, et $g$ est le genre de $\overline{X}$, alors 
  $\chi(X)=2-2g-r$.
\end{rem}

\end{document}